\newtheorem{theorem}{Theorem}[section]
\newtheorem{corollary}{Corollary}[section]
\newtheorem{lemma}{Lemma}[section]
\newtheorem{proposition}{Proposition}[section]
\newtheorem{remark}{Remark}[section]
\numberwithin{equation}{section}
\def\vv{{\mathbf v}}
\def\vn{\widetilde{\vv}}
\def\vp{\widehat{\vv}}
\def\vu{{\mathbf u}}
\def\be{\begin{equation}}
\def\ee{\end{equation}}
\def\ba{\begin{array}}
\def\ea{\end{array}}
\def\ul{{{\mathbf u}_1}}   
\def\ho{{h_1}}
\def\ull{{{\mathbf u}_2}}
\def\v{\ull}
\def\hll{{h_2}}  
\def\rholl{{\rho_2}}
\def\pl{{p_2}}
\def\Vl{{\bf U_2}}
\def\U{{\bf U}}
\def\W{{\bf E}}
\def\F{{\bf F}}
\def\G{{\bf G}}
\def\bR{{\bf R}}
\def\R{{\Bbb R}}
\def\T{{\Bbb T}}
\def\C{C_0}
\def\Cinf{\widehat{C}_0}
\def\cn{\cdot\nabla}
\def\nc{\nabla\cdot}
\def\p{\partial}
\def\pt{\p_t}
\def\Rossby{\tau}
\def\Rossbyi{\frac{1}{\Rossby}}
\def\Froude{\sigma}
\def\Froudei{\frac{1}{\Froude}}
\def\etj{e^{tJ/\Rossby}}
\def\dde{\delta}
\def\At{{e^{\C t}\dde\over1-e^{\C t}\dde }}
\def\bea{\begin{equation}\left\{\begin{array}}
\def\eea{\end{array}\right.\end{equation}}
\def\lesssim{\stackrel{{}_<}{{}_\sim}}
\def\lsm{\lesssim_m}
\def\ppi{\widetilde{p}}
\def\Cg{C_{\gamma}}
\def\ra{\rightarrow}
\begin{document}

%%%%%%%%%%%%%%%%%%%%%%%%%%%%%%%%%%%%%%%%%%%%%%%
% Title
%%%%%%%%%%%%%%%%%%%%%%%%%%%%%%%%%%%%%%%%%%%%%%%
\title[Long time existence of the rapidly rotating Euler equations]
{Long time existence of smooth solutions for\\
the rapidly rotating shallow-water and  Euler equations}

\author[Bin Cheng]{Bin Cheng}
\address[Bin Cheng]{\newline
    Department of Mathematics\newline
    Center of Scientific Computation And Mathematical Modeling (CSCAMM)\newline
        University of Maryland\newline
        College Park, MD 20742 USA}
\email []{bincheng@cscamm.umd.edu}
\urladdr{http://www.math.umd.edu/\~{}bincheng}
\author[Eitan Tadmor]{Eitan Tadmor}
\address[Eitan Tadmor]{\newline
        Department of Mathematics,  Institute for Physical Science and Technology\newline
        and Center of Scientific Computation And Mathematical Modeling (CSCAMM)\newline
        University of Maryland\newline
        College Park, MD 20742 USA}
 \email[]{tadmor@cscamm.umd.edu}
\urladdr{http://www.cscamm.umd.edu/\~{}tadmor}

%\urladdr{}

\date\today

\keywords{Shallow-water equations, rapid rotation, pressureless equations, critical threshold, 2D Euler equations, long-time existence.}
\subjclass{76U05, 76E07, 76N10}

\thanks{\textbf{Acknowledgment.} Research was supported in part by NSF grant 04-07704 and ONR grant N00014-91-J-1076.}

%%%%%%%%%%%%%%%%%%%%%%%%%%%%%%%%%%%%%%%%%%%%%%%
% Abstract
%%%%%%%%%%%%%%%%%%%%%%%%%%%%%%%%%%%%%%%%%%%%%%%

\begin{abstract}
We study the stabilizing effect of rotational  forcing in the nonlinear setting of two-dimensional shallow-water and more general models of compressible Euler equations. In \cite{LT04} we have shown that the pressureless version of these equations admit global smooth solution for a large set of sub-critical initial configurations. In the present work we prove that when rotational force dominates the pressure, it \emph{prolongs} the life-span of smooth solutions for 
$t\lesssim \ln(\dde^{-1})$; here $\dde \ll 1$ is the  ratio of the pressure gradient  measured by the inverse squared Froude number, relative to the dominant rotational forces  measured by  the inverse Rossby number. Our study reveals a ``nearby'' periodic-in-time approximate solution in the small $\dde$ regime, upon which hinges the long time existence of the exact smooth solution. These results are in agreement with the close-to periodic dynamics observed in the ``near inertial oscillation'' (NIO) regime which follows oceanic storms. Indeed, our results indicate the existence of smooth, ``approximate periodic'' solution for a time period of \emph{days}, which is the relevant time period  found in NIO obesrvations.
\end{abstract}
%%%%%%%%%%%%%%%%%%%%%%%%%%%%%%%%%%%%%%%%%%%%%%%%%%%%%%%%%%%%%%%%%%
\maketitle
\tableofcontents

%%%%%%%%%%%%%%%%%%%%%%%%%%%%%%%%%%%%%%%%%%%%%%%%%%%%%%%%%%%%%
\section{Introduction and statement of main results}\label{Intro}
%%%%%%%%%%%%%%%%%%%%%%%%%%%%%%%%%%%%%%%%%%%%%%%%%%%%%%%%%%%%%
\setcounter{figure}{0}
\setcounter{equation}{0}

We are concerned here with two-dimensional systems of nonlinear Eulerian equations driven by pressure and rotational forces. It is well-known that in the absence of rotation, these equations experience a finite-time breakdown: for generic smooth initial conditions, the corresponding solutions will lose  $C^1$-smoothness due to shock formation. 
The presence of rotational forces, however, has a stabilizing effect.
In particular, the pressureless version of these equations admit  global smooth solutions for a large set of so-called sub-critical initial configurations, \cite{LT04}.
It is therefore a natural extension to investigate the balance between the regularizing effects of rotation vs. the tendency of pressure to enforce finite-time breakdown
(we mention in passing the recent work \cite{TW06} on a similar regularizing balance of different competing forces in the  1D Euler-Poisson equations). In this paper we prove the long-time existence of rapidly rotating flows  characterized by ``near-by'' periodic flows.
Thus, rotation \emph{prolongs} the life-span of smooth solutions over increasingly long time periods, which grow longer as the rotation forces become more dominant over pressure.
    
Our model problem is the Rotational Shallow Water (RSW) equations. This system of equations   models large scale geophysical motions in a thin layer of fluid under the influence of the Coriolis  rotational forcing, (e.g. \cite[\S 3.3]{Pe92}, \cite[\S 2.1]{GC87}),
\begin{subequations}\label{sub:phys}
\begin{eqnarray} 
\label{phys2} \pt h+\nabla\cdot(h\vu)&=&0,\\
\label{phys1}\pt \vu+\vu\cn \vu+g\nabla h-f\vu^\perp&=&0.
\end{eqnarray}
\end{subequations}
It governs the unknown  velocity field $\vu:=\big(u^{(1)}(t,x,y),u^{(2)}(t,x,y)\big)$ an 
height $h:=h(t,x,y)$, where $g$ and $f$ stand for the gravitational constant and  the Coriolis frequency. Recall that  equation (\ref{phys2}) observes the conservation of mass and equations (\ref{phys1}) describe balance of momentum by the pressure gradient,  $g\nabla h$, and  rotational forcing, $f\vu^\perp:=f\big(u^{(2)},-u^{(1)}\big)$. 

For convenience, we  rewrite the system (\ref{sub:phys}) in terms of  rescaled, nondimensional variables. To this end, we introduce the characteristic scales, $H$ for total height $h$, $D$ for height fluctuation $h-H$, $U$ for velocity $u$,  $L$ for spatial length and correspondingly $L/U$ for time, and we make the change of variables
\[
\vu=\vu'\left(\frac{t'L}{U},x'L,y'L\right)U, \qquad h=H+h'\left(\frac{t'L}{U},x'L,y'L\right)D.
\]
Discarding all the primes, we arrive at a nondimensional system,
\begin{subequations}
\begin{eqnarray*}
\label{adim2}\pt h+\vu\cn h+\left({H\over D}+h\right)\nabla\cdot \vu&=&0,\\
\label{adim1}\pt \vu+\vu\cn \vu+{gD\over U^2}\nabla h-{fL\over U}\vu^\perp&=&0.
\end{eqnarray*}
\end{subequations}

We are  concerned here with the regime where the pressure gradient and compressibility are of the same order, ${\displaystyle {gD\over U^2}\approx{H\over D}}$. Thus we arrive at the (symmetrizable) RSW system,
\begin{subequations}\label{RSW} 
\begin{eqnarray}  
\pt h+\vu\cdot\nabla h+\left(\Froudei+h\right)\nabla\cdot \vu&=&0, \label{RSW22}\\
\pt \vu+\vu\cdot\nabla \vu+\Froudei\nabla h-\Rossbyi J\vu&=&0,\label{RSW11}.
\end{eqnarray}  
Here $\Froude$ and $\Rossby$, given by
\begin{equation}\label{RSW33}
\Froude:=\frac{U}{\sqrt{gH}}, \quad   \Rossby:=\frac{U}{fL},
\end{equation}
\end{subequations}
are respectively, the Froude  number measuring the inverse pressure forcing and 
the Rossby number measuring the inverse rotational forcing. We use $J$ to denote the $2\times2$ rotation matrix ${\displaystyle J:=\left(\begin{array}{lr} 0 & 1\\-1 & 0\end{array}\right)}$.

To trace their long-time behavior, we approximate (\ref{RSW22}), (\ref{RSW11})  with the successive iterations,
\begin{subequations}\label{suc}
\begin{eqnarray}
\label{hj}
\pt h_j + \vu_{j-1}\cn h_j +\left( \Froudei+h_j\right)\nabla\cdot \vu_{j-1} &= &0, \qquad   j=2,3, \ldots\\
\label{uj}
\pt \vu_j+\vu_j\cn \vu_j+\Froudei \nabla h_{j}-\Rossbyi J\vu_j & = & 0, \qquad j=1,2, \ldots,
\end{eqnarray}
subject to initial conditions, $h_j(0,\cdot)=h_0(\cdot)$ and $\vu_j(0,\cdot)=\vu_0(\cdot)$. Observe that, given $j$, (\ref{suc}) are only weakly coupled through the dependence of $\vu_j$ on $h_j$, so that we only need to specify the initial height $\ho$. Moreover, for $\Froude\gg \Rossby$, the momentum equations (\ref{RSW11}) are ``approximately decoupled'' from the mass equation (\ref{RSW22}) since rotational forcing is substantially dominant over pressure forcing.
Therefore, a first approximation of constant height function will enforce this decoupling, serving as the starting point of the above iterative scheme,

\begin{equation}
\ho\equiv\mbox{constant}.
\end{equation}
\end{subequations}
This, in turn, leads to the  first approximate velocity field, $\ul$, satisfying the pressureless equations,
\begin{equation}\label{ipre}
\pt \ul+\ul\cn \ul-\Rossbyi J\ul=0, \quad\ul(0,\cdot)=\vu_0(\cdot).
\end{equation} 
Liu and Tadmor \cite{LT04} have shown  that there is a ``large set'' of so-called sub-critical initial configurations $\vu_0$, for which the pressureless equations (\ref{ipre}) admit global smooth solutions. Moreover, the pressureless velocity  $\ul(t,\cdot)$ is in fact $2\pi\Rossby$-periodic in time. The regularity of $\ul$ is discussed  in Section \ref{sec:2}.

Having the pressureless solution, $(\ho\equiv\mbox{constant},\ul)$ as a first approximation for the RSW solution $(h,\vu)$, 
in Section \ref{sec:3} we introduce an improved approximation  of the RSW equations, $(\hll,\ull)$, which solves an ``adapted'' version of the second iteration ($j=2$) of (\ref{suc}). This improved approximation  satisfies a specific linearization of the RSW equations around the pressureless velocity $\ul$, with only a one-way coupling between the momentum and the mass equations. Building on the regularity and periodicity of the pressureless velocity $\ul$,  we show that  the solution of this linearized system subject to sub-critical initial data  $(h_0,\vu_0)$, is globally smooth; in fact, both $\hll(t,\cdot)$ and $\ull(t,\cdot)$ retain $2\pi\Rossby$-periodicity in time.

Next, we turn to estimate the deviation between the solution of the linearized RSW system, $(\hll,\ull)$, and the solution of the full RSW system, $(h,\vu)$.  To this end,  we introduce a new non-dimensional parameter 
\[
\dde:={{\Rossby}\over\Froude^2} = {\frac{g H}{f L U}},
\]
measuring the {\it relative} strength of rotation vs. the pressure forcing, and we assume that rotation is the dominant forcing in the sense that $\dde \ll 1$. 
Using the standard energy method we show  in Theorem \ref{thm:main} and its corollary that, starting with $H^m$ sub-critical initial data, the RSW solution  $\big(h(t,\cdot),\vu(t,\cdot)\big)$ remains sufficiently close to $\big(\hll(t,\cdot), \ull(t,\cdot)\big)$ in the sense that,
 
\[
\|h(t,\cdot)-\hll(t,\cdot)\|_{H^{m-3}} + \|\vu(t,\cdot)-\vu_2(t,\cdot)\|_{H^{m-3}} \lesssim
{e^{C_0t}\dde\over(1-{e^{C_0t}\dde)^2}},
\]where constant $C_0=\widehat{C}_0(m,|\nabla\vu_0|_\infty,|h_0|_\infty)\cdot\|\vu_0,h_0\|_m$.
In particular,  we conclude that for a large set of sub-critical initial data, the RSW equations (\ref{RSW}) admit  smooth, ``approximate periodic'' solutions for long time, $t\leq t_{\dde}:=\ln(\dde^{-1})$, in the rotationally dominant regime $\dde\ll1$. 

We comment that our formal notion of ``approximate periodicity'' emphasizes the existence of a periodic approximation $(\hll,\ull)$ nearby the actual flow $(h,\vu)$, with an up-to $O(\dde)\ll1$-error for sufficiently long time. Therefore, strong rotation stabilizes the flow by imposing on it approximate periodicity, which in turn postpones finite time breakdown of classical solutions for a long time. A convincing example is provided by the so called ``near-inertial oscillation'' (NIO) regime, which is observed during the days that follow oceanic storms, e.g. \cite{YB97}. These NIOs are triggered when storms pass by (large $U$'s) and only a thin layer of the oceans is reactive (small aspect ratio $H/L$), corresponding to $\dde={gH\over fLU}\ll1$.  Specifically, with Rossby number $\Rossby\sim {\mathcal O}(0.1)$ and  Froude number $\Froude\sim {\mathcal O}(1)$ we find $\dde \sim 0.1$, which yield the existence of smooth, ``approximate periodic'' solution for $t \sim 2$ days. We note that  the clockwise rotation of cyclonic storms on the Northern Hemisphere produce  negative vorticity which is a preferred scenario of the sub-critical condition (\ref{CT}).
Our results are consistent with the observations regarding the stability and approximate periodicity of the NIO regime.

%%%%%%%%%%%%%%%%%%%%%%

Next, we generalize our result to Euler systems describing the isentropic gasdynamics, in Section \ref{sec:5}, and ideal gasdynamics, in Section \ref{sec:6}. We regard these two systems as successive generalizations of the RSW system using the following formalism,
\begin{subequations}\label{gEuler}
\begin{eqnarray}
\pt \rho+\nc(\rho \vu)&=&0, \label{gEuler1}\\
\pt \vu+\vu\cn \vu+\rho^{-1}\nabla \ppi(\rho,S)&=&f J\vu,
 \label{gEuler2}\\
\pt S+\vu\cn S&=&0.\label{gEuler3}
\end{eqnarray}
Here, the physical variables $\rho$, $S$ are respectively the density and entropy. We use $\ppi(\rho,S)$ for the gas-specific pressure law relating pressure to density and entropy. For the ideal gasdynamics, the pressure law is given as $\ppi:=A\rho^\gamma e^S$ where $A,\gamma$ are two gas-specific physical constants. The isentropic gas equations correspond to constant $S$, for which the entropy equation (\ref{gEuler3}) becomes redundant. Setting $A=g,\gamma=2$ yields the RSW equations with $\rho$ playing the same role as $h$.

The general Euler system (\ref{gEuler}) can be symmetrized by introducing a ``normalized'' pressure function,
\[
p:={\sqrt{\gamma}\over\gamma-1}\ppi^{\gamma-1\over2\gamma}(\rho,S),
\]
and by replacing the density equation (\ref{gEuler1}) with a pressure equation,
\begin{equation}\label{gEuler4}
\pt p+\vu\cn p+{\gamma-1\over2}p\nc\vu=0.
\end{equation}
\end{subequations}
  We then nondimensionalize the above system (\ref{gEuler2}), (\ref{gEuler3}) and (\ref{gEuler4})  into 
\begin{eqnarray*}
\pt p+\vu\cn p+{\gamma-1\over2}\left(\Froudei+p\right)\nc\vu&=&0,\\
\pt\vu+\vu\cn\vu+{\gamma-1\over2}\left(\Froudei+p\right)e^{\Froude S}\nabla p&=&\Rossbyi J\vu,\\ 
\pt S+\vu\cn S&=&0.
\end{eqnarray*}
The same methodology introduced for the RSW equations still applies to the more general Euler system, independent of the pressure law. In particular, our first approximation, the pressureless system, remains the same as in (\ref{ipre}) since it ignores any effect of pressure. We then obtain the second approximation $(\pl,\ull,S_2)$ (or $(\pl,\ull)$ in the isentropic case) from a specific linearization around the pressureless velocity $\ul$. Thanks to the fact that $h$, $p$ and $S$ share a similar role as passive scalars transported by $\vu$, the same regularity and periodicity argument can be employed for $(\pl,\ull,S_2)$ in these general cases as for $(h_2,\ull)$ in the RSW case. The energy estimate, however, needs careful modification for the ideal gas equations due to additional nonlinearity. Finally, we conclude in Theorem 4.2 and 4.3 that, in the rotationally dominant regime $\dde\ll1$, the exact solution stays ``close'' to the globally smooth, $2\pi\Rossby$-periodic approximate solution $(\pl,\ull, S_2)$ for long time in the sense that, starting with $H^m$ sub-critical data, the following estimate holds true for time $t\lesssim \ln(\dde^{-1})$,
\[
\|p(t,\cdot)-\pl(t,\cdot)\|_{m-3}+
\|\vu(t,\cdot)-\ull(t,\cdot)\|_{m-3}+
\|S(t,\cdot)-S_2(t,\cdot)\|_{m-3}<\At.
\]

Our results confirm the stabilization effect of rotation in the nonlinear setting, when it interacts with the slow components of the system, which otherwise tend to destabilize of the dynamics. The study  of such interaction is essential to the  understanding of rotating dynamics, primarily to geophysical flows.   
We can mention only few works from the vast literature available on this topic, and we refer the reader to the recent book of Chemin et. al., \cite{CDGG06} and the references therein, for a state-of-the art of the mathematical theory for rapidly rotating  flows. Embid and Majda \cite{EM96, EM98} studied the singular limit of RSW equations under the two regimes $\Rossby^{-1}\sim\Froude^{-1}\rightarrow\infty$ and 
$\Rossby^{-1}\sim {\mathcal O}(1),\,\Froude^{-1}\rightarrow\infty$. Extensions to more general skew-symmetric perturbations can be found  in the work of Gallagher, e.g. \cite{Ga98}. The series of works of Babin, Mahalov and Nicolaenko, consult \cite{BMN96, BMNZ97, BMN98, BMN00, BMN02} and references therein,  establish long term stability effects  of the rapidly rotating 3D Euler, Navier-Stokes and primitive equations. Finally, we mention the work of
Zeitlin, Reznik and Ben Jelloul in \cite{ZRB01, ZRB03} which categorizes several relevant scaling regimes and correspondingly, derives formal asymptotics in the nonlinear setting.  

We comment here that the approach pursued in the above literature  relies on identifying the limiting system as $\Rossby\rightarrow0$,  which filters out fast scales. The full system is then approximated to a first order, by this slowly evolving limiting system. A rigorous mathematical foundation along these lines was developed by Schochet \cite{Sc94}, which can be traced back to the earlier works of Klainerman and Majda \cite{KM82} and Kreiss \cite{Kr80} (see also \cite{Ta82}).
The key point was the separation of (linear) fast oscillations from the slow scales. 
The novelty of our approach, inspired by the critical threshold phenomena \cite{LT02}, is to adopt the rapidly oscillating and fully {\it nonlinear} pressureless system as a first  approximation and then consider the full system as a perturbation of this fast scale. This enables us to preserve both slow and fast dynamics, and especially, the rotation-induced time periodicity.

\section{First approximation-- the pressureless system}\label{sec:2}
We consider the pressureless system
\begin{subequations}\label{pre}
\begin{equation}\label{u00}
\pt \ul+\ul\cn \ul-\Rossbyi J\ul =  0,
\end{equation}
subject to initial condition $\ul(0,\cdot)=\vu_0(\cdot)$. 
We begin  by recalling the main theorem in \cite{LT04} regarding the global regularity of the pressureless equations (\ref{u00}).

\begin{theorem}\label{CTthm}
Consider the pressureless equations (\ref{u00}) subject to $C^1$-initial data $\ul(0,\cdot)=u_0(\cdot)$. Then, the solution $\ul(t,\cdot)$ stays $C^1$ for all time if and only if the initial data satisfy the critical threshold condition,
\begin{equation}\label{CT}
\Rossby\omega_0(x)+{\Rossby^2\over2}\eta^2_0(x)<1, \quad \ {\rm for \ all} \ x\in \R^2.
\end{equation}
Here, $\omega_0(x)=-\nabla\times \vu_0(x)=\partial_yu_0-\partial_xv_0$ is the initial vorticity and  $\eta_0(x):=\lambda_1-\lambda_2$ is the (possibly complex-valued) spectral gap associated with the eigenvalues of gradient matrix $\nabla \vu_0(x)$. Moreover, these globally smooth solutions, 
$\ul(t,\cdot)$, are $2\pi\Rossby$-periodic in time.
\end{theorem}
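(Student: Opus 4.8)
The plan is to integrate the equation along particle trajectories, where, despite the quadratic nonlinearity, the dynamics becomes explicit. Fix a Lagrangian label $\alpha\in\R^2$ and let $X(t,\alpha)$ solve $\pt X=\ul(t,X)$ with $X(0,\alpha)=\alpha$. Along this trajectory, (\ref{u00}) reduces to the constant-coefficient linear ODE $\frac{d}{dt}\ul=\Rossbyi J\ul$, so the Lagrangian velocity is a pure rotation,
\[
\ul\big(t,X(t,\alpha)\big)=\etj\,\vu_0(\alpha)=\big(\cos(t/\Rossby)\,I+\sin(t/\Rossby)\,J\big)\vu_0(\alpha)
\]
(using $J^2=-I$); in particular $\ul$ is globally bounded and $2\pi\Rossby$-periodic along every trajectory. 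Integrating once more, the trajectory is itself explicit,
\[
X(t,\alpha)=\alpha+\Big(\int_0^t e^{sJ/\Rossby}\,ds\Big)\vu_0(\alpha)=\alpha+\big(\Rossby\sin(t/\Rossby)\,I+\Rossby(1-\cos(t/\Rossby))\,J\big)\vu_0(\alpha),
\]
so $X(t,\cdot)$ is $2\pi\Rossby$-periodic in $t$ as well. By the standard method-of-characteristics continuation, the $C^1$ solution exists on an interval exactly as long as $X(t,\cdot)$ remains a diffeomorphism of $\R^2$, the velocity being recovered by $\ul(t,x)=\etj\,\vu_0\big(X^{-1}(t,x)\big)$.

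The diffeomorphism property is controlled by the flow-map Jacobian $W(t):=\partial X(t,\alpha)/\partial\alpha$, which differentiating the formula above yields in closed form,
\[
W(t)=I+\big(\Rossby\sin(t/\Rossby)\,I+\Rossby(1-\cos(t/\Rossby))\,J\big)M_0,\qquad M_0:=\nabla\vu_0(\alpha).
\]
Since $\frac{d}{dt}W=(\nabla\ul)\,W$ along the trajectory, the velocity gradient there is $\nabla\ul\big(t,X(t,\alpha)\big)=\big(\frac{d}{dt}W\big)W(t)^{-1}=\etj M_0\,W(t)^{-1}$; equivalently, $\nabla\ul$ obeys the matrix Riccati equation $\frac{d}{dt}M=-M^2+\Rossbyi JM$, which $W$ linearizes. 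Hence $\ul$ keeps its $C^1$ smoothness precisely while $\det W(t)\neq0$, and since $\det W(0)=1$ this means $\det W(t)>0$ for all $t>0$ and all $\alpha$.

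It remains to compute $\det W(t)$ and to determine when it stays positive. Expanding the $2\times2$ determinant and using the half-angle identities, $\det W(t)$ is a quadratic form in $\big(\cos\frac{t}{2\Rossby},\sin\frac{t}{2\Rossby}\big)$ whose coefficients involve only the three invariants of $M_0$: its trace $\nc\vu_0$, the quantity $\mathrm{tr}(JM_0)$ tied to the vorticity $\omega_0$, and its determinant $\det M_0=\frac14\big((\nc\vu_0)^2-\eta_0^2\big)$. As $t$ runs over $(0,\infty)$ the vector $\big(\cos\frac{t}{2\Rossby},\sin\frac{t}{2\Rossby}\big)$ sweeps the whole unit circle, so positivity of $\det W(t)$ for all $t$ is equivalent to positive-definiteness of the associated symmetric $2\times2$ matrix, and a direct calculation identifies this with the critical threshold condition (\ref{CT}). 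The ``only if'' direction is the same computation read backwards: if (\ref{CT}) fails at some $x$, then along the trajectory from $x$ the quadratic form is non-positive at some phase, $\det W$ reaches $0$ at a finite time, and $\nabla\ul$ blows up there. Finally, $2\pi\Rossby$-periodicity of $\ul(t,\cdot)$ follows by combining the two periodicities already noted: $X(t,\cdot)$ is $2\pi\Rossby$-periodic and, under (\ref{CT}), a bijection of $\R^2$, while the Lagrangian velocity $\etj\vu_0(\alpha)$ is $2\pi\Rossby$-periodic, so $\ul(t+2\pi\Rossby,\cdot)=\ul(t,\cdot)$.

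The decisive structural step is writing $\nabla\ul$ along trajectories as $\big(\frac{d}{dt}W\big)W^{-1}$ with $W$ the explicitly known Jacobian, which turns a blow-up question for a quadratic PDE into a transparent question about the vanishing of a $2\times2$ determinant. I expect the main obstacle to be the sharp ``if and only if'' in the last step: one must check that positive-definiteness of the $2\times2$ form is \emph{exactly} (\ref{CT}), with $\eta_0^2$ allowed to be negative when $\nabla\vu_0$ has complex eigenvalues, and one must use the explicit, pointwise-nonsingular flow map---rather than a soft compactness argument---to run the continuation in the borderline regime where (\ref{CT}) holds at every point but with a margin not bounded away from $0$.
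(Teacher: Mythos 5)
Your route is precisely the one the paper itself sketches: integrate along characteristics, observe that the Lagrangian velocity is the pure rotation $\etj\vu_0(\alpha)$, write the flow map explicitly, and reduce global $C^1$ regularity to the non-vanishing of $\det W(t)$ with $W=I+\Rossby J(I-\etj)M_0$. Your identity $\nabla\ul=\bigl(\tfrac{d}{dt}W\bigr)W^{-1}=\etj M_0W^{-1}$ is exactly the linearization of the paper's Riccati equation $M'+M^2=\Rossby^{-1}JM$, and your $\det W$ is the denominator in the paper's closed-form expression for $M$ (note in passing that the factor there should read $\Rossby$, not $\Rossby^{-1}$, as your computation of $\int_0^te^{sJ/\Rossby}\,ds$ correctly shows). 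The periodicity argument and the reduction to a quadratic form in $\bigl(\cos\tfrac{t}{2\Rossby},\sin\tfrac{t}{2\Rossby}\bigr)$ are sound.

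The gap is the one step you defer: ``a direct calculation identifies this with (\ref{CT})'' is asserted, not performed, and when performed it does not reproduce (\ref{CT}) as printed. With $d:=\nc\vu_0=\mathrm{tr}\,M_0$, $\mathrm{tr}(JM_0)=-\omega_0$, $\det M_0=\tfrac14(d^2-\eta_0^2)$ and $\det(I+A)=1+\mathrm{tr}A+\det A$, one finds, writing $c=\cos\tfrac{t}{2\Rossby}$, $s=\sin\tfrac{t}{2\Rossby}$,
\[
\det W=c^2+2\Rossby d\,cs+\bigl(1-2\Rossby\omega_0+4\Rossby^2\det M_0\bigr)s^2,
\]
and since the form equals $1$ at $s=0$, its positivity on the whole circle is equivalent to the discriminant condition
\[
1-2\Rossby\omega_0(x)-\Rossby^2\eta_0^2(x)>0,
\]
i.e.\ $\Rossby\omega_0+\tfrac{\Rossby^2}{2}\eta_0^2<\tfrac12$, not $<1$. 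This is not an artifact of your framework: for the shear data $\vu_0=(\beta y,0)$ one has $\omega_0=\beta$, $\eta_0=0$, $\det W=1-\Rossby\beta(1-\cos(t/\Rossby))$, and all characteristics issuing from a vertical line collide at $t=\pi\Rossby$ as soon as $\Rossby\beta=\tfrac12$, even though (\ref{CT}) as displayed is then comfortably satisfied. So either the normalization of $\omega_0,\eta_0$ intended in (\ref{CT}) differs from the one displayed, or the constant is off by a factor of $2$; you must carry out the computation and confront this mismatch rather than declare agreement. Two smaller items to close: in the ``only if'' direction, check that $\etj M_0W^{-1}$ genuinely diverges as $\det W\to0$ (i.e.\ that $M_0\,\mathrm{adj}(W)$ does not vanish at the critical time — easy, but not automatic); and global, not merely local, invertibility of $X(t,\cdot)$ on $\R^2$ for $C^1$ data requires the continuation argument you allude to at the end to be actually written out.
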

\end{subequations}

In \cite{LT04}, Liu and Tadmor gave two different proofs of (\ref{CT}). One was based on the spectral dynamics of $\lambda_j(\nabla\vu)$; another, was based on the flow map associated with  (\ref{u00}), and here we note yet another version of the latter, based on the Riccati-type equation satisfied by the gradient matrix $M=:\nabla \ul$,
\[
M'+M^2=\Rossby^{-1}JM. 
\]
Here $\{\cdot\}':=\pt+\ul\cn$ denotes differentiation along the particle trajectories
\begin{equation}\label{eq:gamma}
\Gamma_{0}:=\{(x,t) \ | \ \dot{x}(t)=\ul(x(t),t), \ x(t_0)=x_0\}.
\end{equation}
Starting with $M_0=M(t_0,x_0)$, the solution of this  equation along the corresponding trajectory $\Gamma_0$ is given by  
\[
M=\etj \left(I+\Rossby^{-1}J\left(I-\etj\right)M_0\right)^{-1}M_0,
\]
and a straightforward calculation based on the Cayley-Hamilton Theorem (for computing the inverse of a matrix) shows that
\begin{equation}\label{M}
\max_{t,x}|\nabla\ul|=\max_{t,x}|M|=\max_{t,x}\left|{\mbox{polynomial}(\Rossby,\etj,\nabla\vu_0)\over(1-\Rossby\omega_0-{\Rossby^2\over2}\eta^2_0)_+}\right|.
\end{equation}
Thus the critical threshold (\ref{CT}) follows.  The periodicity of $\ul$ is proved upon integrating $\ul'={1\over\Rossby}Ju$ and $x'=\ul$ along particle trajectories $\Gamma_0$. It turns out both $x(t)$ and $\ul(t,x(t))$ are $2\pi\Rossby$ periodic, which clearly implies that $\ul(t,\cdot)$ shares the same periodicity.
It follows that  there exists a critical Rossby number, $\Rossby_c:=\Rossby_c(\nabla \vu_0)$  such that the pressureless solution, $\ul(t,\cdot)$, remains smooth for global time whenever $\Rossby\in(0,\Rossby_c)$. This  emphasizes the stabilization effect of the rotational forcing for a ``large" class of sub-critical initial configurations, \cite[\S1.2]{LT04}.
Observe that the critical threshold, $\Rossby_c$ \emph{need not} be small, and in fact, $\Rossby_c=\infty$ for rotational initial data  such that $\eta_0^2 <0$, $\omega_0<\sqrt{-\eta_0^2}$. We shall always limit ourselves, however,  to a \emph{finite} value of the critical threshold, $\Rossby_c$. 

In the next corollary we show that in fact, the pressureless solution retains higher-order smoothness of the sub-critical initial data.  To this end, we introduce the following notations.

\medskip\noindent
\emph{Notations}.  Here and below, $\|\cdot\|_{m}$ denotes the usual $H^m$-Sobolev norm over the 2D torus ${\T^2}$ and $|\cdot|_\infty$ denotes the $L^\infty$ norm. We abbreviate $a\lsm b$ for $a\leq cb$ whenever the constant $c$ only depends on the dimension $m$. We let $\Cinf$ denote $m$-dependent constants that have possible nonlinear dependence on the initial data $|h_0|_\infty$ and $|\nabla\vu_0|_\infty$. The constant,  $\C:=\Cinf\cdot\|(h_0,\vu_0)\|_m$, will be used for estimates involving Sobolev regularity, emphasizing that $\C$ depends \emph{linearly} on the 
$H^m$-size of initial data, $h_0$ and $\vu_0)\|$,  and possibly nonlinearly on their $L^\infty$-size.

\begin{corollary}\label{CTcor}
Fix an integer $m>2$ and consider the pressureless system (\ref{u00}) subject to sub-critical initial data, $u_0\in H^m$. Then, there exists a critical value $\Rossby_c:=\Rossby_c(\nabla \vu_0) < \infty$  such that for $\Rossby\in(0, \Rossby_c]$ we have, uniformly in time,
\begin{subequations}\label{uo:est} 
\begin{eqnarray}%{ccl}
\label{uo:inf}|\nabla\ul(t,\cdot)|_\infty&\le&\Cinf,\\
\label{uo:m}\|\ul(t,\cdot)\|_{m}&\le&\C. 
\end{eqnarray}
\end{subequations} 
\end{corollary}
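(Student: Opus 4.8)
The plan is to propagate the two estimates in \eqref{uo:est} using the explicit representations already derived in Section \ref{sec:2}, combined with a standard $H^m$ energy estimate for the transport-type structure of \eqref{u00}. First I would establish \eqref{uo:inf}: since the data is sub-critical, the critical threshold condition \eqref{CT} holds with some room, i.e. $\Rossby\omega_0(x)+\frac{\Rossby^2}{2}\eta_0^2(x)\le 1-c_0$ for all $x$ and some $c_0>0$; moreover, restricting to $\Rossby\le\Rossby_c$ ensures the denominator in \eqref{M} stays bounded below uniformly in $t$ and $x$. Plugging this into the representation \eqref{M} — whose numerator is a polynomial in $\Rossby$, $\etj$ (a bounded rotation matrix) and $\nabla\vu_0$ — gives $|\nabla\ul(t,\cdot)|_\infty\le\Cinf$, with the constant depending only on $m$ (trivially here), on $|\nabla\vu_0|_\infty$ through the numerator, and on $|h_0|_\infty$ vacuously. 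This is exactly the dependence structure the \emph{Notations} paragraph attaches to $\Cinf$, so \eqref{uo:inf} follows.

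Next, for \eqref{uo:m}, I would run the classical energy method. Apply $\p^\beta$ for $|\beta|\le m$ to \eqref{u00}, pair with $\p^\beta\ul$ in $L^2(\T^2)$, and control the commutator $[\p^\beta,\ul\cn]\ul$ by the Kato--Ponce / Moser-type inequality $\|[\p^\beta,\ul\cn]\ul\|_0\lsm |\nabla\ul|_\infty\|\ul\|_m$; the rotation term $-\Rossbyi J\ul$ is skew-symmetric and contributes nothing to $\frac{d}{dt}\|\ul\|_m^2$, while the divergence term from integrating the transport part by parts again costs only $|\nabla\ul|_\infty$. This yields $\frac{d}{dt}\|\ul(t,\cdot)\|_m\lsm |\nabla\ul(t,\cdot)|_\infty\|\ul(t,\cdot)\|_m$. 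Feeding in the uniform bound \eqref{uo:inf} and invoking Gronwall gives $\|\ul(t,\cdot)\|_m\le\|\vu_0\|_m\,e^{\Cinf t}$, which at first sight only gives exponential-in-time growth, not the uniform bound \eqref{uo:m}.

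The main obstacle, therefore, is upgrading this to a bound \emph{uniform in time}, which is where the $2\pi\Rossby$-periodicity from Theorem \ref{CTthm} is essential. Since $\ul(t,\cdot)$ is periodic in $t$ with period $2\pi\Rossby$, it suffices to bound $\|\ul(t,\cdot)\|_m$ on the compact interval $[0,2\pi\Rossby]$ — and $\Rossby\le\Rossby_c<\infty$ makes this a fixed finite interval — whence $\sup_t\|\ul(t,\cdot)\|_m\le\|\vu_0\|_m\,e^{\Cinf\cdot 2\pi\Rossby_c}=:\C$, and this constant is linear in $\|\vu_0\|_m$ and otherwise depends only on $m$ and on $|\nabla\vu_0|_\infty$ (through $\Cinf$ and through $\Rossby_c=\Rossby_c(\nabla\vu_0)$), matching the claimed form $\C=\Cinf\cdot\|(h_0,\vu_0)\|_m$. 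Alternatively, one can avoid Gronwall blow-up altogether by differentiating the explicit flow-map/Riccati representation for $M=\nabla\ul$ along trajectories $\Gamma_0$ to get pointwise control of higher derivatives directly, but the periodicity-plus-finite-interval argument is cleanest. A small technical point to check is that the energy estimate is carried out on regularized data (or via the a priori estimate / continuation argument) so that the manipulations are justified for the genuinely $H^m$ solution; this is routine given that global-in-time $C^1$ regularity is already known from Theorem \ref{CTthm}.
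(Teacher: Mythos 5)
Your proposal is correct and follows essentially the same route as the paper: the $L^\infty$ bound on $\nabla\ul$ comes from the explicit representation (\ref{M}) with the denominator kept away from zero for $\Rossby\le\Rossby_c$, and the uniform-in-time $H^m$ bound comes from the standard energy/Gronwall inequality ${d\over dt}\|\ul\|_m\lsm|\nabla\ul|_\infty\|\ul\|_m$ combined with the $2\pi\Rossby$-periodicity, which reduces the estimate to the fixed finite interval $[0,2\pi\Rossby_c]$. The extra details you supply (the Moser-type commutator bound, the skew-symmetry of $\Rossbyi J$) are exactly what the paper leaves implicit in the phrase ``standard energy method.''
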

\begin{proof}
We recall the expression for $|\nabla\ul|_\infty$ in (\ref{M}). By continuity argument, there exists a value $\Rossby_c$ such that $1-\Rossby\omega_0-{\Rossby^2\over2}\eta^2_0>1/2$ for all $\Rossby\in(0,\Rossby_c)$ , which in turn implies (\ref{uo:inf}) with a constant $\Cinf$ that depends on $|\nabla\vu_0|_\infty$ and $\Rossby_c$ which also replies on the pointwise value of $\nabla\vu_0$.  

Having control on the $L^\infty$ norm of $\nabla\ul$, we employ the standard energy method to obtain the inequality, 
\[
{d\over dt}\|\ul(t,\cdot)\|_{m}\lsm  |\nabla\ul(t,\cdot)|_{L^\infty}\|\ul(t,\cdot)\|_{m}.
\]
Since $\ul(t,\cdot)$ is $2\pi\Rossby$-periodic, it suffices to consider its energy growth over $0\le t<2\pi\Rossby<2\pi\Rossby_c$.  Combining with estimate (\ref{uo:inf}) and solving the above Gronwall inequality, we prove the $H^m$ estimate (\ref{uo:m}).
\end{proof}

\section{Second approximation -- the linearized system}\label{sec:3}
Once we established the global properties of the pressureless velocity $\ul$, it can be used as the starting point for second iteration of (\ref{suc}). We begin with the
approximate height, $\hll$, governed by (\ref{hj}),
\begin{equation}\label{hl:eq}
\pt \hll + \ul\cn\hll + \left(\Froudei+\hll\right)\nabla\cdot\ul=0, \qquad \hll(0,\cdot)=h_0(\cdot).
\end{equation}
Recall that $\ul$ is the solution of the pressureless system (\ref{u00})
subject to \emph{sub-critical} initial data $\vu_0$, so that $\ul(t,\cdot)$ is smooth, $2\pi\Rossby$-periodic in time. 
The following key lemma shows that the periodicity of $\ul$ imposes the same periodicity on passive scalars transported by such $\ul$'s.

\begin{lemma}\label{period}
Let scalar function $w$ be governed by 
\begin{equation}\label{wp}
\pt w+\nabla\cdot(\ul w)=0
\end{equation} 
where $\ul(t,\cdot)$ is a globally smooth, $2\pi\Rossby$-periodic solution of the pressureless equations (\ref{u00}). Then $w(t,\cdot)$ is also $2\pi\Rossby$-periodic.
\end{lemma}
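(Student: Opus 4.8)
The plan is to exploit the structure of the transport equation (\ref{wp}) together with the already-established $2\pi\Rossby$-periodicity of both the velocity field $\ul(t,\cdot)$ and, crucially, of the particle trajectories $\Gamma_0$ generated by $\ul$ (as noted following Theorem \ref{CTthm}, both $x(t)$ and $\ul(t,x(t))$ are $2\pi\Rossby$-periodic along trajectories). First I would rewrite (\ref{wp}) in non-conservative form along the trajectories $\Gamma_0$ from (\ref{eq:gamma}): setting $\{\cdot\}' = \pt + \ul\cn$ as in the paper, equation (\ref{wp}) becomes $w' = -(\nabla\cdot\ul)\, w$, an ODE along each particle path. Integrating this scalar linear ODE gives $w(t,x(t)) = w(0,x_0)\exp\big(-\int_0^t (\nabla\cdot\ul)(s,x(s))\,ds\big)$; equivalently, in terms of the flow map $X(t,\cdot)$ with $X(0,x_0)=x_0$, one has the classical identity $w(t,X(t,x_0))\,\det(\nabla_{x_0}X(t,x_0)) = w(0,x_0)$, since the Jacobian $J(t):=\det(\nabla_{x_0}X)$ satisfies exactly $J' = (\nabla\cdot\ul)J$.

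The key step is then to show the Jacobian $J(t,x_0)$ is itself $2\pi\Rossby$-periodic in $t$. This follows from the periodicity of the gradient matrix $M=\nabla\ul$ along trajectories: from the explicit representation of $M$ recorded in Section \ref{sec:2} (the solution of $M'+M^2=\Rossby^{-1}JM$ via $M=\etj(I+\Rossby^{-1}J(I-\etj)M_0)^{-1}M_0$), the matrix $M(t,x(t))$ is manifestly $2\pi\Rossby$-periodic because $\etj$ is. Hence $\nabla\cdot\ul = \mathrm{tr}\,M$ is $2\pi\Rossby$-periodic along each trajectory, so $\int_t^{t+2\pi\Rossby}(\nabla\cdot\ul)(s,x(s))\,ds$ is independent of $t$; moreover, since $x(t)$ returns to $x_0$ at $t=2\pi\Rossby$, the trajectory closes up, and one checks that this integral over one full period in fact vanishes — indeed $J(2\pi\Rossby,x_0) = J(0,x_0) = 1$ because the flow map $X(2\pi\Rossby,\cdot)$ coincides with the identity (every particle returns to its starting point), forcing $\int_0^{2\pi\Rossby}(\nabla\cdot\ul)(s,x(s))\,ds = 0$. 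Combining, $w(t+2\pi\Rossby, X(t+2\pi\Rossby,x_0)) = w(t,X(t,x_0))$ for all $t$; since $X(t+2\pi\Rossby,x_0)=X(t,x_0)$ as well, we get $w(t+2\pi\Rossby,\cdot)=w(t,\cdot)$ pointwise, which is the claim.

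The main obstacle I anticipate is justifying that the flow map returns exactly to the identity after one period — i.e. that the spatial periodicity "$x(t)$ is $2\pi\Rossby$-periodic" holds simultaneously for \emph{all} starting points $x_0$ with the \emph{same} period, and with enough regularity in $x_0$ to differentiate the flow map and talk about its Jacobian. This is where the sub-critical hypothesis and Corollary \ref{CTcor} enter: the uniform-in-time bounds $|\nabla\ul|_\infty\le\Cinf$ and $\|\ul(t,\cdot)\|_m\le\C$ guarantee $\ul\in C^1$ globally with controlled derivatives, so the flow $X(t,\cdot)$ is a well-defined $C^1$ (indeed $H^m$) diffeomorphism for all $t$, and the periodicity of $\ul(t,\cdot)$ as a function on $\T^2$ — together with uniqueness of ODE solutions — propagates to periodicity of the flow: $X(t+2\pi\Rossby,x_0)$ and $X(t,x_0)$ both solve $\dot y = \ul(t,y)$ (using $\ul(t+2\pi\Rossby,\cdot)=\ul(t,\cdot)$) with the same data at $t=0$, hence coincide. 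Once this is in place the rest is the routine ODE computation above, and the conservative versus non-conservative forms match up because the Jacobian factor is precisely what distinguishes them.
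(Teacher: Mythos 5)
Your proof is correct, and it reaches the conclusion by a route that differs from the paper's in the device used, though both hinge on the same key fact: under the hypotheses every particle trajectory of $\ul$ is a closed orbit of period exactly $2\pi\Rossby$. The paper introduces the relative vorticity $\phi:=\nabla\times\ul+\Rossby^{-1}$, observes that it satisfies the same conservative equation (\ref{wp}) as $w$, and hence that the ratio $w/\phi$ is purely advected and therefore constant along trajectories; periodicity of $w$ then follows from the closing of the trajectories together with the periodicity of $\phi$. You replace the reference solution $\phi$ by the Jacobian $\det(\nabla_{x_0}X)$ of the flow map and invoke the Liouville identity $w(t,X(t,x_0))\det(\nabla_{x_0}X)=w(0,x_0)$; the two devices are essentially equivalent, since $\phi$ solving the same conservative equation means precisely that $\det(\nabla_{x_0}X)=\phi(0,x_0)/\phi(t,X(t,x_0))$. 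What your version buys is that it sidesteps any concern about dividing by $\phi$ where the relative vorticity might vanish, at the cost of having to differentiate the flow map in $x_0$ --- which, as you correctly note, is justified by the uniform $C^1$ control of Corollary \ref{CTcor}. Your closing observation that $X(2\pi\Rossby,\cdot)=\mathrm{id}$ forces $\int_0^{2\pi\Rossby}(\nabla\cdot\ul)(s,x(s))\,ds=0$, hence $2\pi\Rossby$-periodicity of the Jacobian, is exactly the right substitute for the paper's use of the periodicity of $\phi$, and the final passage from equality along trajectories to pointwise equality via surjectivity of $X(t,\cdot)$ is sound.
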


\begin{proof}
Let $\phi:=\nabla\times \ul+\Rossby^{-1}$ denote the so-called relative vorticity. By (\ref{u00}) it satisfies the same equation $w$ does, namely, 
\[
\pt \phi+\nabla\cdot(\ul\phi)=0.
\]
Coupled with (\ref{wp}), it is easy to verify that the ratio $w/\phi$ satisfies a transport equation 
\[
\big(\pt +\ul\cn \big){w\over\phi}=0
\]
which in turn implies that $w/\phi$ remains constant along the trajectories 
$\Gamma_0$ in (\ref{eq:gamma}). But (\ref{u00}) tells us that $\ul'=\frac{J}{\Rossby}\ul$, yielding  $\ul(t,x(t))=e^{{t\over\Rossby}J}\vu_0(x_0)$. We integrate to find,  $x(2\pi\Rossby)=x(0)$, namely, the trajectories come back to their initial positions at $t=2\pi\Rossby$. Therefore 
\[
{w\over\phi}(2\pi\Rossby,x_0)={w\over\phi}(0,x_0) \quad \text {for all} \ x_0\text{{}'s}.
\] 
Since the above argument is time invariant, it implies that $w/\phi(t,\cdot)$ is  $2\pi\Rossby$-periodic. The conclusion follows from the fact that $\ul(t,\cdot)$ and thus $\phi(t,\cdot)$ are $2\pi\Rossby$-periodic.
\end{proof}

Equipped with this lemma we conclude  the following.

\begin{theorem}\label{hl:th}
Consider the  mass equation (\ref{hl:eq}) on a 2D torus, $\T^2$, 
linearized around the pressureless velocity field $\ul$ and
subject to sub-critical initial data $(h_0,\vu_0)\in H^m({\T^2})$ with $m>5$.
It admits a globally smooth solutions, $\hll(t,\cdot)\in H^{m-1}({\T^2})$ 
which is $2\pi\Rossby$-periodic in time, 
and the following upper bounds hold uniform in time,
\begin{subequations}\label{hl:est}
\begin{eqnarray}
\label{hl:inf}|\hll(t,\cdot)|_\infty\le\Cinf\left(1+\frac{\Rossby}{\Froude}\right),\\
\label{hl:m}\|\hll(t,\cdot)\|_{m-1}\le \C \left(1+\frac{\Rossby}{\Froude}\right). 
\end{eqnarray}
\end{subequations}
 \end{theorem}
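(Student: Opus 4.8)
The plan is to exploit the weak (one-way) coupling in the linearized mass equation (\ref{hl:eq}): since $\ul$ is already under control by Corollary \ref{CTcor}, equation (\ref{hl:eq}) is a genuinely \emph{linear} transport equation for $\hll$ with smooth, time-periodic coefficients, plus the inhomogeneous source $-\Froudei\nabla\cdot\ul$. The periodicity of $\hll$ will come directly from Lemma \ref{period} after we account for the $\Froudei$ term, and the two bounds (\ref{hl:inf})--(\ref{hl:m}) will come from the $L^\infty$ maximum-principle estimate and the standard energy method, respectively.

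First I would isolate the periodic structure. Write $\hll=\Froudei\,\psi+\widehat{h}$, or more cleanly observe that $w:=\Froudei+\hll$ satisfies exactly the conservative transport equation $\pt w+\nabla\cdot(\ul w)=0$ of Lemma \ref{period}, with initial data $w(0,\cdot)=\Froudei+h_0$. Indeed, adding $\Froudei\nabla\cdot\ul$ to both sides of (\ref{hl:eq}) turns the left-hand side into $\pt\hll+\nabla\cdot(\ul\hll)+\Froudei\nabla\cdot\ul=\pt w+\nabla\cdot(\ul w)$. Hence Lemma \ref{period} applies verbatim to $w$, giving that $w(t,\cdot)$, and therefore $\hll(t,\cdot)=w(t,\cdot)-\Froudei$, is $2\pi\Rossby$-periodic in time. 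This also makes the global-in-time existence automatic: it suffices to solve on $[0,2\pi\Rossby)\subset[0,2\pi\Rossby_c)$, a bounded interval, where the linear transport equation with $H^{m-1}$ coefficients has a unique $H^{m-1}$ solution by the standard linear theory (the one derivative loss relative to $\ul\in H^m$ is why we ask $m>5$, so that $m-1>4$ keeps us in the Sobolev algebra range).

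For the quantitative bounds I would run Gronwall on the interval $[0,2\pi\Rossby]$ using the control on $\ul$ from (\ref{uo:inf})--(\ref{uo:m}). For (\ref{hl:inf}): along trajectories $\Gamma_0$, $\hll'=-\left(\Froudei+\hll\right)\nabla\cdot\ul$, so $\frac{d}{dt}|\hll|\le|\nabla\cdot\ul|_\infty\big(\Froudei+|\hll|\big)\le\Cinf\big(\Froudei+|\hll(t,\cdot)|_\infty\big)$; with $|\hll(0,\cdot)|_\infty=|h_0|_\infty$ and $t\le2\pi\Rossby_c$, Gronwall yields $|\hll(t,\cdot)|_\infty\le\Cinf\big(|h_0|_\infty+\Froudei\big)\le\Cinf\big(1+\Rossby/\Froude\big)$ after absorbing the $\Rossby_c$-dependent factor $e^{2\pi\Rossby_c\,\Cinf}$ into $\Cinf$ and pulling out $\Rossby$ (using $\Rossby\le\Rossby_c$) to match the stated form. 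For (\ref{hl:m}): apply $\p^\alpha$ for $|\alpha|\le m-1$ to (\ref{hl:eq}), pair with $\p^\alpha\hll$ in $L^2$, commute $\p^\alpha$ past $\ul\cn$ and $\hll\nabla\cdot\ul$ using the Moser/Kato--Ponce commutator and product estimates, and bound the coefficients by $|\nabla\ul|_\infty\lesssim\Cinf$ and $\|\ul\|_m\lesssim\C$; the inhomogeneous term contributes $\Froudei\|\nabla\cdot\ul\|_{m-1}\lesssim\Froudei\C$. This gives $\frac{d}{dt}\|\hll(t,\cdot)\|_{m-1}\lsm\Cinf\|\hll(t,\cdot)\|_{m-1}+\Cinf\,\Froudei\,\C$, and Gronwall over $[0,2\pi\Rossby]$ together with $\|\hll(0,\cdot)\|_{m-1}=\|h_0\|_{m-1}\le\|(h_0,\vu_0)\|_m$ produces $\|\hll(t,\cdot)\|_{m-1}\le\C\big(1+\Rossby/\Froude\big)$, again after absorbing the time-independent exponential factor.

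The main obstacle is bookkeeping rather than conceptual: one must be careful that all Gronwall constants are genuinely of the promised type, i.e. that the exponential factors $e^{2\pi\Rossby_c\Cinf}$ stay within the class $\Cinf$ (nonlinear in $|h_0|_\infty,|\nabla\vu_0|_\infty$ only, via $\Rossby_c=\Rossby_c(\nabla\vu_0)$) and do not secretly depend on $\Froude$ or on higher norms of the data — the $\Froudei$ enters only additively, never inside an exponent, which is what keeps the final bound linear in $\C$ with the clean $\big(1+\Rossby/\Froude\big)$ prefactor. The only genuine analytic input is the commutator/product estimate in the energy step, which is standard provided $m-1>d/2+1=2$; this is guaranteed by the hypothesis $m>5$ (in fact $m>3$ would do for this lemma alone, but the larger threshold is presumably needed downstream).
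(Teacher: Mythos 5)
Your overall strategy coincides with the paper's: identify $w=\Froudei+\hll$ as a solution of the conservative transport equation so that Lemma \ref{period} yields the $2\pi\Rossby$-periodicity, and then run $L^\infty$ and $H^{m-1}$ Gronwall estimates over a single period using the bounds on $\ul$ from Corollary \ref{CTcor}. That skeleton is correct and is essentially the paper's own proof.

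There is, however, one step that fails as written: the passage from $\Cinf\left(|h_0|_\infty+\Froudei\right)$ to $\Cinf\left(1+\Rossby/\Froude\right)$ by ``pulling out $\Rossby$ (using $\Rossby\le\Rossby_c$)''. The inequality $\Froudei\le\Cinf\,\Rossby/\Froude$ is equivalent to $\Rossby^{-1}\le\Cinf$, i.e.\ it requires a \emph{lower} bound on $\Rossby$, whereas $\Rossby\le\Rossby_c$ is an upper bound; the step is false precisely in the rapid-rotation regime $\Rossby\to0$ that the theorem targets. The correct mechanism, which is the one the paper uses, is that the source $\Froudei\nabla\cdot\ul$ only acts for a time $t\le 2\pi\Rossby$ before periodicity resets the estimate, so its Duhamel contribution is $\Froudei\left(e^{\Cinf t}-1\right)\lesssim\Froudei\,\Cinf\, t\, e^{\Cinf t}\le \Cinf\,\Rossby/\Froude$. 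In other words, you must keep the homogeneous factor $e^{\Cinf t}$ (bounded using $t\le2\pi\Rossby\le2\pi\Rossby_c$) separate from the factor $e^{\Cinf t}-1$ multiplying $\Froudei$ (bounded using $e^x-1\lesssim x$ with $x=\Cinf t\lesssim \Cinf\Rossby$); collapsing both into a single $e^{2\pi\Rossby_c\Cinf}$ loses the crucial factor of $\Rossby$. Your $H^{m-1}$ step is salvageable as stated, since integrating the constant source $\Cinf\Froudei\C$ over the interval $[0,2\pi\Rossby]$ does produce the length-of-interval factor $\Rossby$, but the $L^\infty$ argument must be repaired as above. This is not cosmetic bookkeeping: the $\Rossby/\Froude$ (rather than $1/\Froude$) prefactor in (\ref{hl:inf})--(\ref{hl:m}) is exactly what later makes the residual $\bR$ of order $\dde=\Rossby/\Froude^2$ in (\ref{Rineq}), and an $O(\Froudei)$ bound here would destroy the proof of Theorem \ref{thm:main}.
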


\begin{proof}
Apply lemma \ref{period} with $w:=\Froude^{-1}+\hll$ to (\ref{hl:eq})  to conclude that $\hll$ is also $2\pi\Rossby$-periodic. We turn to the examine the regularity of $\hll$. First, its $L^\infty$ bound (\ref{hl:inf}) is studied using the $L^\infty$ estimate for scalar transport equations which yields an inequality for $|\hll|_\infty=|\hll(t,\cdot)|_\infty$,
\[
{d\over dt}|\hll|_\infty\le|\nabla\cdot\ul|_\infty(\Froude^{-1}+|\hll|_\infty).
\]
Combined with the $L^{\infty}$ estimate of $\nabla\ul$ in (\ref{uo:inf}), this Gronwall inequality implies
\[
|\hll|_\infty\le e^{\Cinf t}|h_0|_\infty+{1\over\Froude}\left(e^{\Cinf t}-1\right).
\]
As before, due to the $2\pi\Rossby$-periodicity of $\hll$ and the subcritical condition $\Rossby\le\Rossby_c$, we can replace the first $t$ on the right with $\Rossby_c$, the second $t$ with $2\pi\Rossby$, and (\ref{hl:inf}) follows.

For the $H^{m-1}$ estimate (\ref{hl:m}), we use the energy method and the Gagliardo-Nirenberg inequality to obtain a similar inequality for $|\hll|_{m-1}=|\hll(t,\cdot)|_{m-1}$,
\[
{d\over dt}\|\hll\|_{m-1}\lsm|\nabla\ul|_\infty\|\hll\|_{m-1}+\left({1\over\Froude}+|\hll|_\infty\right)\|\ul\|_m.
\]
Applying the estimate on $\ul$ in (\ref{uo:est}) and the $L^\infty$ estimate on $\hll$ in (\ref{hl:inf}), we find the above inequality shares a similar form as the previous one. Thus the estimate (\ref{hl:m}) follows by the same periodicity and sub-criticality argument as for (\ref{hl:inf}). We note by passing the {\it linear} dependence of $\C$ on $\|(h_0,\vu_0)\|_m$.\end{proof}

To continue with the second approximation, we turn to the approximate momentum equation (\ref{uj}) with $j=2$, 
\begin{equation}\label{iterv}
\pt\v+\v\cn\v+{1\over\Froude}\nabla h_2-{1\over\Rossby}\v=0.
\end{equation}

The following \emph{splitting} approach will lead to a simplified linearization of (\ref{iterv}) which is ``close'' to (\ref{iterv}) and still maintains the nature of our methodology. The idea is to treat the nonlinear term and the pressure term in (\ref{iterv}) separately, resulting in two systems for $\vn\approx\v$ and $\vp\approx\v$,
\begin{subequations}\begin{eqnarray}\label{itervn}\pt\vn+\vn\nc\vn-{1\over\Rossby}J\vn&=&0,\\
\label{itervp}\pt\vp+{1\over\Froude}\nabla h_2-{1\over\Rossby}J\vp&=&0,\end{eqnarray}\end{subequations} 
subject to the same initial data $\vn(0,\cdot)=\vp(0,\cdot)=\vu_0(\cdot).$

The first system (\ref{itervn}), ignoring the pressure term, is identified as the pressureless system (\ref{pre}) and therefore is solved as \begin{subequations}\[\vn=\ul,\]
while the second system (\ref{itervp}), ignoring the nonlinear advection term, is solved using the Duhamel's principle,
\[\ba{rcl}\vp(t,\cdot)&=&\etj\left(\vu_0(t,\cdot)-\displaystyle\int_0^t{e^{-sJ/\Froude}\over\Froude}\nabla h_2(s,\cdot)\,ds\right)\\
\displaystyle&\approx&\etj\left(\vu_0(t,\cdot)-\displaystyle\int_0^t{e^{-sJ/\Froude}\over\Froude}\nabla h_2(t,\cdot)\,ds\right)\\
\displaystyle&=&\etj\vu_0(t,\cdot)+\displaystyle{\Rossby\over\Froude}J(I-\etj)\nabla h_2(t,\cdot).\ea\]
Here, we make an approximation by replacing $h_2(s,\cdot)$ with $h_2(t,\cdot)$ in the integrand, which introduces an error of order $\Rossby$, taking into account the $2\pi\Rossby$ period of $h(t,\cdot)$.\end{subequations}

Now, synthesizing the two solutions listed above, we make a correction to $\vp$  by replacing $\etj\vu_0$ with $\ul$. This gives the very form of our approximate velocity field $\ull$ (with tolerable abuse of notations)   
\begin{subequations}\label{sub:ul}
\begin{equation}\label{solu2}
\ull:=\ul+{\Rossby\over\Froude}J(I-\etj)\nabla h_2(t,\cdot).
\end{equation}
A straightforward computation shows that this velocity field, $\ull$, satisfies the following approximate momentum equation,
\begin{equation}\label{iteru2}
\pt\ull+\ul\cn\ull+{1\over\Froude}\nabla h_2-{1\over\Rossby}\ull^\perp=R\end{equation}
where
\begin{equation}\label{Ru}
\ba{rrrl}&R&:=&\displaystyle{\Rossby\over\Froude}J(I-\etj)(\pt+\ul\cn)\nabla h_2(t,\cdot)\\
\mbox{(by (\ref{hl:eq}))}&&=&-\displaystyle{\Rossby\over\Froude}J(I-\etj)\left[(\nabla\ul)^\top\nabla\hll+\nabla((\Froudei+\hll)\nc\ul)\right].\ea
\end{equation}
\end{subequations}

Combining Theorem \ref{hl:th} on $\hll(t,\cdot)$ with Gagliardo-Nirenberg inequality, we arrive at the following corollary on periodicity and regularity of $\ull$.
\begin{corollary}\label{ul:cor}
Consider the velocity field $\ull$ in (\ref{sub:ul}) subject to sub-critical initial data $(h_0,\vu_0)\in H^m({\T^2})$ with $m>5$. Then, $\ull(t,\cdot)$ is a $2\pi\Rossby$-periodic in time, and the following upper bound, uniformly in time, holds,
\begin{subequations}\begin{equation*}
\|\ull-\ul\|_{m-2}\le\C {\Rossby\over\Froude}\left(1+{\Rossby\over\Froude}\right).
\end{equation*} 
In particular, since $\|\ul\|_m\le\C$ for subcritical $\Rossby$, 
we conclude that $\ull(t,\cdot)$ has the Sobolev regularity,
\begin{equation*}
\|\ull\|_{m-2}\le\C\left(1+{\Rossby\over\Froude}+{\Rossby^2\over\Froude^2}\right).
\end{equation*}\end{subequations}
\end{corollary}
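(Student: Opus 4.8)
The plan is to obtain both assertions essentially by inspection of the explicit representation (\ref{solu2}), which gives $\ull-\ul=\frac{\Rossby}{\Froude}J(I-\etj)\nabla\hll(t,\cdot)$, combined with the periodicity and Sobolev bounds already in hand for $\ul$ (Corollary \ref{CTcor}) and for $\hll$ (Theorem \ref{hl:th}). No new analytic device is needed; everything is extracted from those two results plus the algebraic structure of (\ref{solu2}).

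For the time-periodicity I would argue that each $t$-dependent factor on the right-hand side of (\ref{solu2}) is $2\pi\Rossby$-periodic: the pressureless velocity $\ul(t,\cdot)$ by Theorem \ref{CTthm}; the rotation matrix $\etj=e^{tJ/\Rossby}$ because $e^{2\pi J}=I$; and the approximate height $\hll(t,\cdot)$, hence $\nabla\hll(t,\cdot)$, by Theorem \ref{hl:th}. Since $\ull$ is, at each fixed $x$, a finite algebraic combination of these, it is itself $2\pi\Rossby$-periodic in time.

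For the first Sobolev bound I would estimate $\|\ull-\ul\|_{m-2}$ by pulling the $x$-independent matrix $J(I-\etj)$ out of the norm: since $J$ and $\etj$ are orthogonal, $\|J(I-\etj)\|\le 2$ uniformly in $t$, so $\|\ull-\ul\|_{m-2}\le\frac{2\Rossby}{\Froude}\|\nabla\hll(t,\cdot)\|_{m-2}\le\frac{2\Rossby}{\Froude}\|\hll(t,\cdot)\|_{m-1}$, and the bound (\ref{hl:m}) then yields $\|\ull-\ul\|_{m-2}\le\C\frac{\Rossby}{\Froude}\left(1+\frac{\Rossby}{\Froude}\right)$. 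The hypothesis $m>5$ and the single-derivative loss from $\hll\in H^{m-1}$ to $\ull-\ul\in H^{m-2}$ are precisely what Theorem \ref{hl:th} supplies. The second estimate is then immediate from the triangle inequality $\|\ull\|_{m-2}\le\|\ul\|_{m-2}+\|\ull-\ul\|_{m-2}$, using $\|\ul\|_{m-2}\le\|\ul\|_m\le\C$ from (\ref{uo:m}) and multiplying out $\frac{\Rossby}{\Froude}\left(1+\frac{\Rossby}{\Froude}\right)$.

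I do not expect a genuine obstacle; the two points deserving a word of care are the uniform-in-$t$ control of the matrix factor $J(I-\etj)$, handled by orthogonality, and checking that the constant retains the structural form $\C=\Cinf\cdot\|(h_0,\vu_0)\|_m$ — that is, that it stays \emph{linear} in the $H^m$-size of the data — which holds because the data enters only through the already-linear estimates (\ref{hl:m}) and (\ref{uo:m}), numerical factors such as the $2$ above being absorbed into $\C$. (The Gagliardo--Nirenberg inequality invoked in the surrounding discussion is not actually required for this particular corollary, since $\ull-\ul$ is merely a constant-in-$x$ matrix applied to $\nabla\hll$; it becomes relevant only when the same splitting is later applied to genuinely nonlinear terms.)
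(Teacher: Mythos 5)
Your proof is correct and follows essentially the route the paper intends: the corollary is stated there without a written proof, justified only by the remark that it follows from Theorem \ref{hl:th}, and your argument (periodicity of each factor in (\ref{solu2}), uniform boundedness of the orthogonal matrix $J(I-\etj)$, the one-derivative loss $\|\nabla\hll\|_{m-2}\le\|\hll\|_{m-1}$ combined with (\ref{hl:m}), and the triangle inequality) is exactly the computation being alluded to. Your parenthetical observation that the Gagliardo--Nirenberg inequality cited in the paper's lead-in is not actually needed here, because $\ull-\ul$ is a constant-in-$x$ matrix applied to $\nabla\hll$ rather than a nonlinear product, is accurate.
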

We close this section by noting that  the second iteration led to an approximate RSW system linearized  around the pressuerless velocity field, $\ul$, (\ref{hl:eq}),(\ref{sub:ul}), which governs our improved, $2\pi\Rossby$-periodic  approximation, 
$(\hll(t,\cdot),\ull(t,\cdot))\in H^{m-1}(\T^2)\times H^{m-2}(\T^2)$.

\section{Long time existence of approximate periodic solutions}
\subsection{The shallow-water equations}\label{sec:4}
How close is $(\hll(t,\cdot),\ull(t,\cdot))$ to the exact solution $(h(t,\cdot),\vu(t,\cdot))$? Below we shall show that their distance, measured in $H^{m-3}({\T}^2)$, does not exceed ${\displaystyle \At}$. Thus, for sufficiently small $\dde$, the RSW solution $(h,\vu)$ is \emph{``approximately periodic''} which in turn implies its long time stability. This is the content of our main result.

\begin{theorem}\label{thm:main}
Consider the rotational shallow water (RSW) equations on a fixed 2D torus,
\begin{subequations} 
\begin{eqnarray} 
\pt h+\vu\cdot\nabla h+\left(\Froudei+h\right)\nabla\cdot \vu&=&0 \label{RSW2}\\ 
\pt \vu+\vu\cdot\nabla \vu+\Froudei\nabla h-\Rossbyi J\vu&=&0\label{RSW1} 
\end{eqnarray}  
\end{subequations} 
subject to sub-critical initial data $(h_0,\vu_0)\in H^m({\T^2})$ with $m>5$ and $\alpha_0:=\min(1+\Froude h_0(\cdot))>0$. 
Let 
\[
\dde={{\Rossby}\over\Froude^2}
\]
denote the ratio between the  Rossby number $\Rossby$ and the squared  Froude number $\Froude$, with subcritical $\Rossby \leq \Rossby_c(\nabla\vu_0)$ so that (\ref{CT}) holds. Assume $\Froude\le1$ for substantial amount of pressure forcing in (\ref{RSW1}). 
Then, there exists a constant $\C$, depending only on $m$, $\Rossby_c$, $\alpha_0$ and in particular depending linearly on $\|(h_0,\vu_0)\|_m$, such that the RSW equations admit a smooth,  ``approximate periodic'' solution in the sense that there exists a near-by $2\pi\Rossby$-periodic solution, $(\hll(t,\cdot),\ull(t,\cdot))$, such that
\begin{equation}\label{RSWspan}
\|p(t,\cdot)-\pl(t,\cdot)\|_{m-3}+ \|\vu(t,\cdot)-\ull(t,\cdot)\|_{m-3}\le\At.
\end{equation} 
Here $p$ is the ``normalized height'' such that $1+{1\over2}\Froude p=\sqrt{1+\Froude h}$, and correspondingly, $\pl$ satisfies $1+{1\over2}\Froude \pl=\sqrt{1+\Froude \hll}$. 

It follows that  the life span of the RSW solution, $t\lesssim t_{\dde}:=\ln(\dde^{-1})$ is prolonged due to the rapid rotation $\dde \ll 1$, and in particular, it tends to infinity when $\dde\rightarrow 0$. 
\end{theorem}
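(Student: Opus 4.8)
The plan is to derive an energy estimate for the difference between the exact RSW solution $(p,\vu)$, written in symmetrized variables, and the approximate periodic solution $(\pl,\ull)$ constructed in Sections \ref{sec:3}--\ref{sec:4}, and then bootstrap this estimate with a continuation argument as long as the right-hand side $\At$ stays small. First I would symmetrize the RSW system: with $p$ defined by $1+\tfrac12\Froude p=\sqrt{1+\Froude h}$, equations (\ref{RSW2})--(\ref{RSW1}) become a symmetric hyperbolic system in $(p,\vu)$ of the schematic form $\pt p+\vu\cn p+\tfrac12(\Froudei+p)\nc\vu=0$, $\pt\vu+\vu\cn\vu+\tfrac12(\Froudei+p)\nabla p-\Rossbyi J\vu=0$, where the constraint $\alpha_0=\min(1+\Froude h_0)>0$ guarantees the symmetrizer stays positive-definite and the change of variables is smooth. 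Then I would write the analogous symmetrized system satisfied by $(\pl,\ull)$ — using (\ref{hl:eq}) for $\hll$, the definition (\ref{solu2}) of $\ull$, and the identity (\ref{iteru2})--(\ref{Ru}) for the residual $R$ — so that the approximate pair solves the \emph{same} system up to explicitly controlled error terms: a residual $R$ of size $O(\Rossby/\Froude)$ in the momentum equation (from Theorem \ref{hl:th} and Corollary \ref{ul:cor}), plus the commutator-type errors coming from linearizing the advection around $\ul$ instead of $\vu$, and from the $\Froudei+h$ versus $\Froudei+\hll$ discrepancy.

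Next I would set $(\W,\F):=(p-\pl,\vu-\ull)$ and subtract the two systems. The key structural point is that the \emph{stiff} term $\Rossbyi J$ is linear and skew-symmetric, so it contributes nothing to the $H^{m-3}$ energy; what survives are (i) transport/advection commutators, which by the standard Kato--Ponce / Moser estimates are bounded by $\C\,(\|\nabla\vu\|_\infty+\|\nabla\ull\|_\infty)\,\|(\W,\F)\|_{m-3}$, using the regularity bounds $\|\ul\|_m\le\C$ (Corollary \ref{CTcor}), $\|\hll\|_{m-1}\le\C(1+\Rossby/\Froude)$, $\|\ull\|_{m-2}\le\C(1+\Rossby/\Froude+\Rossby^2/\Froude^2)$; (ii) the singular pressure coupling $\Froudei\nabla\W$ and $\Froudei\nc\F$, which must \emph{cancel} between the $p$-equation and the $\vu$-equation in the symmetric energy — this is exactly why symmetrization is needed, and it is the reason only the $H^{m-3}$ (not $H^{m-1}$) norm is controlled, since the cancellation costs derivatives through the variable-coefficient factor $(\Froudei+p)$ versus $(\Froudei+\hll)$; (iii) the forcing $R$ and the linearization errors, contributing an inhomogeneous term of size $\C\,\dde$ after one uses $\Froude\le 1$, $\Rossby\le\Rossby_c$, and $\Rossby/\Froude=\Froude\dde\le\dde$. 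Collecting these yields a differential inequality of Riccati type,
\[
\frac{d}{dt}\|(\W,\F)(t,\cdot)\|_{m-3}\le \C\,\|(\W,\F)\|_{m-3}+\C\,\|(\W,\F)\|_{m-3}^2+\C\,\dde ,
\]
where the quadratic term arises from the genuinely nonlinear self-advection $\F\cn\F$ and from products like $\W\,\nc\F$.

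Finally I would integrate this Riccati inequality with zero initial data. A direct comparison argument shows that as long as the solution exists and stays bounded, $\|(\W,\F)(t,\cdot)\|_{m-3}\le g(t)$ where $g$ solves $g'=\C g+\C g^2+\C\dde$, $g(0)=0$; absorbing constants, $g(t)\le \At=e^{\C t}\dde/(1-e^{\C t}\dde)^2$ for $e^{\C t}\dde<1$, which is precisely the claimed bound (\ref{RSWspan}). A standard continuation/bootstrap argument then closes the loop: on the maximal interval where $e^{\C t}\dde\le\tfrac12$, say, the bound $\|(\W,\F)\|_{m-3}\lesssim\dde$ keeps $(p,\vu)$ within an $O(\dde)$ tube of the smooth periodic $(\pl,\ull)$, hence keeps $\alpha(t)=\min(1+\Froude h(t,\cdot))$ bounded away from $0$ and keeps $\|\vu(t,\cdot)\|_{m-3}$ finite, so the local solution extends; this forces the life span to include $\{t:e^{\C t}\dde\le\tfrac12\}\supseteq\{t\le\C^{-1}\ln(\dde^{-1})-O(1)\}$, i.e. $t\lesssim\ln(\dde^{-1})$, and $t_\dde\to\infty$ as $\dde\to 0$.

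The main obstacle is step (ii): making the singular $\Froudei$-terms cancel in the energy estimate \emph{uniformly} in the large parameter $\Froudei$, while the coefficients multiplying them — $(\Froudei+p)$ for the exact solution versus $(\Froudei+\hll)$ for the approximation — differ. One must be careful that the mismatch $p-\hll$ (equivalently $h-\hll$, controlled only by $\|\W\|_{m-3}$ itself plus the $O(\dde)$ approximation error) enters the singular terms multiplied by $\Froudei$, so a naive bound loses the small parameter. The resolution is to keep the pressure coupling in its symmetric divergence form, integrate by parts so that the $\Froudei$-terms pair up exactly, and relegate the coefficient mismatch to lower-order (undifferentiated) factors where the extra $\Froudei$ is compensated by $\Rossby/\Froude\le\dde$ hidden in $\|\hll-\Froudei\cdot\text{const}\|$ and in $R$; tracking this compensation through the Moser estimates, and verifying it only costs the three derivatives claimed, is the technical heart of the argument.
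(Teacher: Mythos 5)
Your proposal is correct and follows essentially the same route as the paper: symmetrize both the exact and the approximate systems via the normalized height $p$, bound the residual of the periodic approximate solution by ${\mathcal O}(\dde)$ using the regularity estimates of Sections \ref{sec:2}--\ref{sec:3} (and the Appendix for $\pl$), and close a forced Riccati-type energy inequality for the $H^{m-3}$ difference with zero initial data. The cancellation of the singular $\Froudei$-terms that you flag as the main obstacle is handled in the paper exactly as you suggest, automatically through the symmetric form $\pt\U+B(\U,\nabla\U)+K[\U]=0$, so no new idea is missing.
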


\begin{proof}
We compare the solution of the RSW system (\ref{RSW2}),(\ref{RSW1}) with the  solution, $(\hll,\ull)$, of  approximate RSW system (\ref{hl:eq}),(\ref{sub:ul}). To this end, we rewrite the latter in the equivalent form,
\begin{subequations}
\begin{eqnarray}
\label{R2}
\pt \hll+\ull\cn\hll+\left(\Froudei+\hll\right)\nabla\cdot\ull&=&(\ull-\ul)\cn\hll+\left(\Froudei+\hll\right)\nabla\cdot(\ull-\ul)\\
\label{R1}
\qquad \pt \ull+\ull\cn\ull+\Froudei\nabla\hll-\Rossbyi J\ull&=& 
(\ull-\ul)\cn\ull+R.
\end{eqnarray}
\end{subequations}

The approximate system differs from the exact one, (\ref{RSW2}),(\ref{RSW1}),  in the residuals on the RHS of (\ref{R2}),(\ref{R1}). We will show that they  have an amplitude of order ${\dde}$. In particular, the comparison in the  rotationally dominant regime, $\dde \ll 1$ leads to a long-time existence of a smooth RSW solution which remains ``nearby" the time-periodic solution,
 $(\hll,\ull)$. To show that $(\hll,\ull)$ is indeed an approximate solution for the RSW equations, we proceed as follows.

We first  symmetrize the both systems so  that we can employ the standard energy method for nonlinear hyperbolic systems. To this end, We set the new variable (``normalized height'') $p$ such that $1+{1\over2}\Froude p=\sqrt{1+\Froude h}$.  Compressing notations with $\U:=(p,\vu)^\top$, we transform (\ref{RSW2}),(\ref{RSW1}) into the \emph{symmetric hyperbolic} quasilinear system
\begin{equation}\label{U}
\pt \U+B(\U,\nabla \U)+K[\U]=0.
\end{equation} 
Here $B(\F,\nabla \G):=A_1(\F)\G_x+A_2(\F)\G_y$ where $A_1,A_2$ are bounded linear functions with values being symmetric matrices, and $K[\F]$ is a skew-symmetric linear operator so that $\langle K[\F],\F\rangle=0$. By standard energy arguments, e.g. \cite{Ka75},\cite{KM82},\cite{KL04}), the symmetric form of (\ref{U}) yields an exact RSW solution $\U$, which stays smooth for finite time $t \lesssim 1$. The essence of our main theorem is that for small $\dde$'s, rotation  \emph{prolongs} the life span of classical solutions up to $t\sim {\mathcal O}(\ln\dde^{-1})$. To this end, we symmetrize the  approximate system (\ref{R2}), (\ref{R1}), using a new variable $\pl$ such that $1+{1\over2}\Froude\pl=\sqrt{1+\Froude\hll}$. Compressing notation with $\Vl:=(\pl,\ull)^\top$, we have 
\begin{equation}\label{V1}
\pt \Vl+B(\Vl,\nabla\Vl)+K(\Vl)=\bR
\end{equation} 
where the residual $\bR$ is given by 
\[
\bR:=\left[\ba{rcl}(\ull-\ul)\cn\pl&+&\left(\frac{2}{\Froude}+\pl\right)\nabla\cdot(\ull-\ul)\\
(\ull-\ul)\cn\ull&-&R\ea\right],
\] 
with $R$ defined in (\ref{Ru}).
\noindent
We will show $\bR$ is small which in turn, using the symmetry of (\ref{U}) and (\ref{V1}), will imply that $\|\U-\Vl\|_{m-3}$ is equally small. Indeed, thanks to the fact that $H^{m-3}(\T^2)$ is an algebra for $m>5$, every term in the above expression is upper-bounded in $H^{m-3}$, by the quadratic products of the terms, $\|\ul\|_m,\|\pl\|_{m-1},\|\ull\|_{m-2},\|\ull-\ul\|_{m-2}$, up to a factor of ${\mathcal O}(1+\Froudei)$.
The Sobolev regularity of these terms, $\ul,\ull$ and $\pl$ is guaranteed, respectively, in corollary \ref{CTcor}, corollary \ref{ul:cor} and theorem \ref{hl:th}. Moreover, the non-vacuum condition, $1+\Froude h_0\ge\alpha_0>0$, implies that $1+\Froude \hll$ remains uniformly bounded from below, and by standard arguments (carried out in Appendix A), 
$\|\pl\|_{m-2}\le\C( 1+\Rossby/\Froude)$.    
Summing up, the residual $\bR$ does not exceed,
\begin{equation}\label{Rineq}
\|\bR\|_{m-3}\le\C^2\left(\dde+ \frac{\Rossby}{\Froude}+...+{\Rossby^4\over\Froude^4}\right)\lesssim \C^2\dde,
\end{equation} 
for sub-critical $\Rossby\in(0,\Rossby_c)$ and under scaling assumptions $\dde<1$, $\Froude<1$.

We now claim that the same ${\mathcal O}(\dde)$-upperbound holds for the error $\W:=\Vl-\U$, 
for a long time, $t\lesssim t_\dde$.
Indeed, subtracting (\ref{U}) from (\ref{V1}), we find the error equation
\[
\pt \W+B(\W,\nabla \W)+K[\W]=-B(\Vl,\nabla \W)-B(\W,\nabla\Vl)+\bR.
\]
By the standard energy method using integration by parts and Sobolev inequalities while utilizing the symmetric structure of $B$ and the skew-symmetry of $K$, we arrive at
\[
{d\over dt}\|\W\|^2_{m-3}\lsm\|\W\|^3_{m-3}+\|\Vl\|_{m-2}\|\W\|^2_{m-3}+\|\bR\|_{m-3}\|\W\|_{m-3}.
\]

Using the regularity estimates of $\Vl=(\pl, \ull)^\top$ and the upper bounds on $\bR$ in (\ref{Rineq}), we end up with an energy inequality for $\|\W(t,\cdot)\|_{m-3}$,   
\[
{d\over dt}\|\W\|_{m-3}\lsm \|\W\|_{m-3}^2+\C\|\W\|_{m-3}+\C^2\dde, \quad\|\W(0,\cdot)\|_{m-3}=0.
\]
A straightforward integration of this forced Riccati equation (consult for example, \cite[\S5]{LT02}), shows that the error $\|\W\|_{m-3}$ does not exceed
\begin{equation}\label{T} 
\|\U(t,\cdot)-\Vl(t,\cdot)\|_{m-3}\le\At.
\end{equation}
In particular, the RSW equations admits an ``approximate periodic" $H^{m-3}({\T}^2)$-smooth solutions for $t\le{1\over \C} \ln (dde^{-1})$ for $\dde \ll 1$. 
\end{proof}
\begin{remark}\label{hhl:re}The estimate on the actual height function $h$ follows by applying the Gagliardo-Nirenberg inequality to $h-\hll=p(1+{\Froude\over4} p)-\pl(1+{\Froude\over4}\pl)=(p-\pl)(1+{\Froude\over4}(p-\pl)+{\Froude\over2}\pl)$,
\[
\|h(t,\cdot)-\hll(t,\cdot)\|_{m-3}\lesssim{e^{\C t}\dde\over(1-e^{\C t}\dde)^2}.
\] 

\end{remark}
Our result is closely related to observations of the so called ``near-inertial oscillation'' (NIO) in oceanography (e.g. \cite{YB97}). These NIOs are mostly seen after a storm blows over the oceans. They exhibit almost periodic dynamics with a period consistent with the Coriolis force and stay stable for about 20 days which is a long time scale relative to many oceanic processes such as the storm itself. This observation agrees with our theoretical result regarding the stability and periodicity of RSW solutions. In terms of physical scales, our rotationally dominant condition, ${\displaystyle \dde={gH\over fLU}\ll1},$ provides a physical characterization of this phenomenon. Indeed, NIOs are triggered when storms pass by (large $U$) and only a thin layer of the oceans is reactive (small aspect ratio $H/L$). Upon using the multi-layer model (\cite[\S 6.16]{Pe92}), we consider scales $f=10^{-4}s^{-1}, L=10^5m, H=10^2m, U=1ms^{-1}, g=0.01ms^{-2}$ (reduced gravity due to density stratification --   consult \cite[\S 1.3]{Pe92}). With this parameter setting, 
$\dde=0.1$, and theorem \ref{thm:main}  implies the existence of smooth, approximate periodic solution over time scale  $\ln(\dde^{-1})L/U\approx2$ days. We note in passing that  most cyclonic storms on the Northern Hemisphere rotates clockwise,  yielding a negative vorticity, $\omega_0=\partial_yu_0-\partial_xv_0<0$, which is a preferred scenario of the sub-critical condition (\ref{CT}) assumed in theorem \ref{thm:main}.

%Outside the Earth, the giant and fast rotating Jovian planets  provide a parameter setting that is even better suited our stability results. For instance, consistent flow patterns have been long observed in  the Jupiter atmosphere, including its famous Great Red Spot that has been observed since the 17th century, driven by the Jupite's Coriolis force.
%It spearsds over a length scale of order $L \sim 1.5\times10^7m$ with winds speed of up to $U \sim 300 m/s$. Jupiter rotates two times fast as the earth (\cite{GC87}) with 
%$f \sim 3\times 10^{-5}$ 
%% typical value of the Rossby number on the Jupiter is 0.07 (\cite{GC87}):
%% U/fL = 300/4.5*100  sim 0.7
% Our results show teh persistence of an approximate periodic solution which persists stable for a time period of $L/U \ln(\dde^{-1}) \sim 10^5/2 (\ln 1.3\times 10^5- \ln gH) \sim 
%5\times 10^5 s$ or ...  
   
\subsection{The isentropic gasdynamics}\label{sec:5}
In this section we  extend theorem \ref{thm:main} to rotational 2D Euler equations for isentropic gas, 
\begin{subequations}\label{sub:EUph}
\begin{eqnarray}\label{EUph}
\pt \rho+\nabla\cdot(\rho \vu)&=&0\\
\pt \vu+\vu\cn \vu+\rho^{-1}\nabla \ppi(\rho)-f\vu^{\perp}&=&0.\label{EUpu}
\end{eqnarray}
Here, $\vu:=(u^{(1)}, u^{(2)})^\top$ is the velocity field, $\rho$ is the density and 
$\ppi=\ppi(\rho)$ is the pressure which  for simplicity, is taken to be that of a 
polytropic gas, given by the $\gamma$-power law, 
\begin{equation}\label{isenp}
\ppi(\rho)=A\rho^\gamma.
\end{equation}
\end{subequations}
The  particular case $A=g/2, \gamma=2$, corresponds to the RSW equations (\ref{phys2}),(\ref{phys1}). The following argument for long term existence of the 2D rapidly rotating isentropic equations applies,  with minor modifications,  to the  more general  pressure laws, $\ppi(\rho)$, which induce the hyperbolicity of (\ref{EUph}).

We first transform the isentropic Euler equations (\ref{EUph}) into their nondimensional form,
\begin{subequations}\label{isen:ab}
\begin{eqnarray*}
\pt \rho+\vu\cn\rho+\left(\Froudei+\rho\right)\nabla\cdot \vu&=&0 \label{isen:a}\\
 \pt \vu+\vu\cn \vu+\frac{1}{\Froude^{2}}\nabla(1+\Froude\rho)^{\gamma-1}-\Rossbyi J\vu&=&0 \label{isen:b}
\end{eqnarray*}
\end{subequations}
where the Mach number $\Froude$ plays the same role as the Froude number in the RSW equation. In order to utilize the technique developed in the previous section, we introduce a new variable $h$ by setting $1+\Froude h=(1+\Froude\rho)^{\gamma-1}$, so that the new variables, $(h,\vu)$, satisfy 
\begin{subequations}\label{sub:EU12}
\begin{eqnarray}\label{EU1}
\pt h+\vu\cn h+(\gamma-1)\left(\Froudei+h\right)\nabla\cdot \vu&=&0,\\
\label{EU2}\pt \vu+\vu\cn \vu+\Froudei\nabla h-\Rossbyi J\vu&=&0.
\end{eqnarray}
\end{subequations}
This is an analog to the  RSW equations (\ref{RSW2}),(\ref{RSW1}) except for the  additional factor $(\gamma-1)$ in the mass equation (\ref{EU1}). We can therefore  duplicate the steps which led to theorem \ref{thm:main} to obtain a long time existence  for the rotational Euler equations (\ref{EU1}),(\ref{EU2}). We proceed as follows.

An approximate solution is constructed in two steps. First, we use the $2\pi\Rossby$-periodic  pressureless solution, $(\ho\equiv\mbox{constant}, \ul(t,\cdot))$ for sub-critical initial data, 
$(h_0,\vu_0)$. Second, we construct a $2\pi\Rossby$-periodic solution $(\hll(t,\cdot),\ull(t\cdot))$ as the solution to an approximate  system of the  isentropic equations, \emph{linearized} around the pressureless velocity $\ul$,
\begin{eqnarray*}
\pt \hll+ \ul\cdot\nabla\hll + (\gamma-1)\left(\Froudei+\hll\right)\nabla\cdot\ul=0,\\
\ull:= \ul + \frac{\Rossby}{\Froude}J\left(I-\etj\right)\nabla\hll(t,\cdot).
\end{eqnarray*}
In the final step, we compare $(h,\vu)$ with the $2\pi\Rossby$-periodic approximate solution,
$(\hll,\ull)$. To this end, we symmetrize the corresponding systems using $\U=(p,\vu)^\top$ with
the normalized density function $p$ satisfying $1+{1\over2}\sqrt{1\over \gamma-1}\Froude p=\sqrt{1+\Froude h}$.
Similarly, the approximate system is symmetrized with the variables $\Vl=(\pl,\ull)$ where
$1+{1\over2}\sqrt{1\over \gamma-1}\Froude \pl=\sqrt{1+\Froude \hll}$.
We conclude

\begin{theorem}\label{thm:main1}
Consider the rotational isentropic equations on a fixed 2D torus, (\ref{isen:a}), (\ref{isen:b}),
subject to sub-critical initial data $(\rho_0,\vu_0)\in H^m({\T^2})$ with $m>5$ and $\alpha_0:=\min(1+\Froude \rho_0(\cdot))>0$. 

\noindent
Let 
\[
\dde={{\Rossby}\over\Froude^2}
\]
denote the ratio between the Rossby and the squared Mach numbers, with  sub-critical 
 $\Rossby \leq \Rossby_c(\nabla\vu_0)$ so that (\ref{CT}) holds. Assume $\Froude<1$ for substantial amount of pressure in (\ref{isen:b}). 
Then, there exists a constant $\C$, depending only on $m$, $\|(\rho_0,\vu_0)\|_m$, $\Rossby_c$ and $\alpha_0$, such that the RSW equations admit a smooth,  ``approximate periodic'' solution in the sense that there exists a near-by $2\pi\Rossby$-periodic solution, $(\rholl(t,\cdot),\ull(t,\cdot))$ such that
\begin{equation}\label{span:isen}
\|p(t,\cdot)-\pl(t,\cdot)\|_{m-3}+ \|\vu(t,\cdot)-\ull(t,\cdot)\|_{m-3} \le\At.
\end{equation}
Here, $p$ is the normalized density function satisfying $1+\Froude p=(1+\Froude\rho)^{\gamma-1\over2}$, and $\pl$ results from the same normalization for $\rholl$. 

It follows that  the life span of the isentropic solution, $t\lesssim t_{\dde}:=1+\ln(\dde^{-1})$ is prolonged due to the rapid rotation $\dde \ll 1$, and in particular, it tends to infinity when $\dde\rightarrow 0$. 
\end{theorem}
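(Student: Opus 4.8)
The plan is to follow verbatim the proof architecture of Theorem \ref{thm:main}, exploiting the structural parallel between the rotational isentropic system (\ref{EU1}),(\ref{EU2}) and the RSW system (\ref{RSW2}),(\ref{RSW1}): the only difference is the harmless factor $(\gamma-1)$ multiplying the compressibility term in the mass equation. First I would record that the pressureless first approximation $\ul$ is literally unchanged (it ignores pressure), so Corollary \ref{CTcor} applies verbatim and gives $|\nabla\ul|_\infty\le\Cinf$, $\|\ul\|_m\le\C$, uniformly in time, together with $2\pi\Rossby$-periodicity, on the sub-critical range $\Rossby\le\Rossby_c(\nabla\vu_0)$. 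Next, the approximate height $\hll$ now solves
\[
\pt\hll+\ul\cn\hll+(\gamma-1)\left(\Froudei+\hll\right)\nc\ul=0,\qquad \hll(0,\cdot)=h_0(\cdot),
\]
and since $w:=\Froudei/(\gamma-1)+\hll/(\gamma-1)$ — or more simply any affine function of $\hll$ — still satisfies a transport-divergence equation of the form $\pt w+\nc((\gamma-1)\ul w)\cdot(\cdots)$, one checks that Lemma \ref{period} still applies after absorbing the constant $(\gamma-1)$ (the relative vorticity $\phi=\nabla\times\ul+\Rossby^{-1}$ is unaffected, and the ratio $w/\phi$ is still transported); hence $\hll(t,\cdot)$ is $2\pi\Rossby$-periodic. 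The $L^\infty$ and $H^{m-1}$ bounds from Theorem \ref{hl:th} go through with $\Cinf,\C$ picking up an extra harmless factor of $(\gamma-1)$, yielding $\|\hll\|_{m-1}\le\C(1+\Rossby/\Froude)$.

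Then I would construct $\ull$ by the identical splitting/Duhamel argument of Section \ref{sec:3}, obtaining
\[
\ull:=\ul+{\Rossby\over\Froude}J(I-\etj)\nabla\hll(t,\cdot),
\]
which satisfies an approximate momentum equation $\pt\ull+\ul\cn\ull+\Froudei\nabla\hll-\Rossbyi J\ull=R$ with a residual $R$ of the same form as (\ref{Ru}) but with $(\Froudei+\hll)$ replaced by $(\gamma-1)(\Froudei+\hll)$. Corollary \ref{ul:cor} then gives $\|\ull-\ul\|_{m-2}\le\C(\Rossby/\Froude)(1+\Rossby/\Froude)$ and $2\pi\Rossby$-periodicity of $\ull$. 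With $(\hll,\ull)$ in hand I would symmetrize both the exact system (\ref{EU1}),(\ref{EU2}) and the approximate system, using the normalized density $p$ with $1+\tfrac12\sqrt{1/(\gamma-1)}\,\Froude p=\sqrt{1+\Froude h}$ (and the same for $\pl$ in terms of $\hll$); this is exactly the normalization that turns (\ref{EU1}),(\ref{EU2}) into a symmetric hyperbolic quasilinear system $\pt\U+B(\U,\nabla\U)+K[\U]=0$ with $B$ symmetric and $K$ skew — the extra $(\gamma-1)$ is absorbed precisely by the square-root and the $\sqrt{1/(\gamma-1)}$ factor. Writing the approximate system as $\pt\Vl+B(\Vl,\nabla\Vl)+K(\Vl)=\bR$, I would bound $\|\bR\|_{m-3}\le\C^2\dde$ by the algebra property of $H^{m-3}$ for $m>5$, using the regularity estimates above and the non-vacuum bound $1+\Froude\rho_0\ge\alpha_0>0$ (which, as in Appendix A, keeps $1+\Froude\hll$ bounded below so that $\|\pl\|_{m-2}\le\C(1+\Rossby/\Froude)$), together with the scaling assumptions $\Froude<1$, $\dde<1$.

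Finally, subtracting the two symmetric systems and running the standard energy estimate with integration by parts, exploiting symmetry of $B$ and skew-symmetry of $K$, I would derive the forced Riccati inequality
\[
{d\over dt}\|\W\|_{m-3}\lsm\|\W\|_{m-3}^2+\C\|\W\|_{m-3}+\C^2\dde,\qquad \W:=\Vl-\U,\quad\|\W(0,\cdot)\|_{m-3}=0,
\]
whose integration (as in \cite[\S5]{LT02}) yields $\|\U(t,\cdot)-\Vl(t,\cdot)\|_{m-3}\le\At$ and hence (\ref{span:isen}) and the life-span $t\lesssim\ln(\dde^{-1})$. The passage from $p$ back to $\rho$ is the Gagliardo–Nirenberg argument of Remark \ref{hhl:re}. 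I expect the only genuinely non-routine point to be verifying that the normalization $1+\tfrac12\sqrt{1/(\gamma-1)}\,\Froude p=\sqrt{1+\Froude h}$ does symmetrize (\ref{EU1}),(\ref{EU2}) with bounded symmetric coefficient matrices $A_1,A_2$ and skew $K$ uniformly in the relevant parameter range — i.e. checking that the $(\gamma-1)$ factor is exactly compensated — and, relatedly, that $1+\Froude h=(1+\Froude\rho)^{\gamma-1}$ stays bounded away from $0$ and $\infty$ given the non-vacuum hypothesis on $\rho_0$; everything else is a transcription of Sections \ref{sec:2}–\ref{sec:4} with the bookkeeping of the constant $(\gamma-1)$.
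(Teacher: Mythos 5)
Your proposal follows essentially the same route as the paper: the paper's own proof of Theorem~\ref{thm:main1} is precisely ``introduce $1+\Froude h=(1+\Froude\rho)^{\gamma-1}$, observe that $(h,\vu)$ satisfies the RSW system up to the factor $(\gamma-1)$ in the mass equation, duplicate Sections~\ref{sec:2}--\ref{sec:4}, and symmetrize with $1+\tfrac12\sqrt{1/(\gamma-1)}\,\Froude p=\sqrt{1+\Froude h}$.'' Your construction of $\ul$, $\hll$, $\ull$, the residual bound $\|\bR\|_{m-3}\lesssim\C^2\dde$, and the forced Riccati inequality all match.

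One step in your write-up is wrong as stated, though the conclusion survives. You claim that for $w$ an affine function of $\hll$ ``the ratio $w/\phi$ is still transported.'' It is not when $\gamma\neq2$: from
$\pt\hll+\ul\cn\hll+(\gamma-1)(\Froudei+\hll)\nc\ul=0$, the quantity $w:=\Froudei+\hll$ satisfies $\pt w+\ul\cn w+(\gamma-1)w\nc\ul=0$, which is \emph{not} of the form $\pt w+\nc(\ul w)=0$ required by Lemma~\ref{period}; a direct computation gives $(\pt+\ul\cn)(w/\phi)=(2-\gamma)(w/\phi)\nc\ul\neq0$. The $(\gamma-1)$ multiplies $\nc\ul$, not $w$, so it cannot be ``absorbed'' by rescaling $w$. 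The repair is immediate: either apply Lemma~\ref{period} to $v:=w^{1/(\gamma-1)}$, which does satisfy $\pt v+\nc(\ul v)=0$ (and note that $v$ is, up to constants, $1+\Froude\rholl$, i.e.\ one is really invoking the lemma on the linearized \emph{continuity} equation in the original density variable), or check that $w/\phi^{\gamma-1}$ is transported. Either way $2\pi\Rossby$-periodicity of $\hll$ follows and the rest of your argument — which needs that periodicity to convert the Gronwall bounds into uniform-in-time bounds — goes through. The remaining points you flag as non-routine (that the chosen normalization symmetrizes the system and that $1+\Froude\hll$ stays away from vacuum) are exactly the points the paper handles by its choice of $p$ and by Appendix~A.
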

\begin{remark}\label{rho:re}For the actual density functions, $\rho-\rholl=\Froudei[(1+\Froude p)^{2\over\gamma-1}-(1+\Froude \pl)^{2\over\gamma-1}]=\int_0^1\Cg[1+\Froude(\theta(p-\pl)+\pl)]^{{2\over\gamma}-1}\,d\theta$
\[
\|\rho(t,\cdot)-\rholl(t,\cdot)\|_{m-3}\lesssim{e^{\C t}\over(1-e^{\C t})^{2\over\gamma-1}},
\]
in the physically relevant regime $\gamma\in(1,3)$.
\end{remark}

\subsection{The ideal gasdynamics}\label{sec:6}
We turn our attention to the full Euler equations in the 2D torus,

\begin{subequations}\label{Euler}
\begin{eqnarray*}
\pt \rho+\nc(\rho \vu)&=&0, \label{Euler1}\\
\pt \vu+\vu\cn \vu+\rho^{-1}\nabla \ppi(\rho,S)&=&f J\vu,
 \label{Euler2}\\
\pt S+\vu\cn S&=&0,\label{Euler3}
\end{eqnarray*}
\end{subequations}
where the pressure law is given as a function of the density, $\rho$ and the specific  entropy $S$, $\ppi(\rho,S):=\rho^\gamma e^{S}$. It can be symmetrized by defining a new variable -- the ``normalized'' pressure function,
\[
p:={\sqrt{\gamma}\over\gamma-1}\ppi^{\gamma-1\over2\gamma},
\]
and by replacing the density equation (\ref{Euler1}) by a (normalized) pressure equation, so that the above system is recast into an equivalent and symmetric form,
e.g., \cite{Ka75},\cite{Gr98}
\begin{eqnarray*}e^S\pt p+e^S\vu\cn p+\Cg e^Sp\nc \vu&=&0,\\
\pt \vu+\vu\cn \vu+\Cg e^Sp\nabla p&=&fJu, \qquad \Cg:={\gamma-1\over2},\\
\pt S+\vu\cn S&=&0.\end{eqnarray*}
It is the exponential function, $e^S$, involved in \emph{triple} products such as $e^Sp\nabla p$, that makes the ideal gas system a nontrivial generalization of the RSW and isentropic gas equations.

We then proceed to the nondimensional form by substitution,
\[
\vu\ra\mbox{U} u',\quad p\ra\mbox{P}(1+\Froude p'),\quad S=\ln(p\rho^{-\gamma})\ra\ln(\mbox{P}\mbox{R}^{-\gamma})+\Froude S'
\] 
After discarding all the primes, we arrive at a nondimensional system
\begin{subequations}\label{nEuler}
\begin{eqnarray}
e^{\Froude S}\pt p+e^{\Froude S}\vu\cn p+\Cg\left({e^{\Froude S}-1\over\Froude}+e^{\Froude S}p\right)\nc \vu&=&-\Cg\Froudei\nc \vu,\\
\label{nEuler:u}\pt \vu+\vu\cn \vu+\Cg\left({e^{\Froude S}-1\over\Froude}+e^{\Froude S}p\right)\nabla p&=&-\Cg\Froudei\nabla p+\Rossbyi Ju,\\
\pt S+\vu\cn S&=&0,
\end{eqnarray}
\end{subequations}
where $\Froude$ and $\Rossby$ are respectively, the Mach and the Rossby numbers. With abbreviated notation, $\U:=(p,\vu,S)^\top$, the equations above amount to a symmetric hyperbolic system written in the compact form,
\begin{equation}\label{nEuler:comp} 
A_0(S)\pt\U+A_1(\U)\partial_x\U+A_2(\U)\partial_y\U=K[\U].
\end{equation}
Here, $A_i (i=0,1,2)$ are symmetric-matrix-valued functions, {\it nonlinear} in $\U$ and in particular $A_0$ is always positive definite. The linear operator $K$ is skew-symmetric so that $\langle K[\U],\U\rangle=0$.

Two successive approximations are then constructed based on the iterations (\ref{suc}), starting with $j=1$,
\begin{eqnarray*}
p_1&\equiv&\mbox{constant},\\
\pt\ul+\ul\cn\ul&=&\Rossbyi J\ul,\\
S_1&\equiv&\mbox{constant}.
\end{eqnarray*}
Identified as the pressureless solution, $\ul$ is used to linearize the system, resulting in the following approximation
\begin{subequations}\label{nEuler2}
\begin{eqnarray}
\pt \pl+\ul\cn\pl+\Cg\pl\nc\ull&=&-\Cg\Froudei\nc\ull,\\
\ull-\ul&=&{\Rossby\over\Froude}J(I-\etj)\Cg e^{\Froude S_2}(1+\Froude p_2)\nabla p_2,\\
\pt S_2+\ul\cn S_2&=&0
\end{eqnarray}
\end{subequations}

The $2\pi\Rossby$-periodicity and global regularity of $\U_2:=(p_2,\vu_2,S_2)^\top$ follow along  the same lines outlined for the RSW equations in section \ref{sec:3} (and therefore omitted), together with the following nonlinear estimate for $e^{\Froude S}$,
\[
\|e^{\Froude S}-1\|_m=\left\|\sum^\infty_{j=1}{({\Froude S})^j\over j!}\right\|_m
\lsm\sum^\infty_{j=1}{(C_m|{\Froude S}|_\infty)^{j-1}\over j!}\|{\Froude S}\|_m
={e^{C_m|{\Froude S}|_\infty}-1\over C_m|{\Froude S}|_\infty}\|{\Froude S}\|_m;
\]
for the latter, we  apply recursively the Gagliardo-Nirenberg inequality to typical terms  $\|({\Froude S})^j\|_m$. Notice the entropy variable (both the exact and approximate ones) always satisfies a transport equation and therefore is conserved along particle trajectories, which implies that the $L^\infty$ norm of the entropy variable is an invariant. Thus, we arrive at an estimate
\begin{equation}\label{exp:est}
\|e^{\Froude S}-1\|_m\le\Froude\Cinf\|S\|_m.
\end{equation}  
Of course, the same type of estimate holds  for the approximate entropy, $S_2$.

Finally, we subtract the approximate system (\ref{nEuler2}) from the exact system (\ref{nEuler:comp}), arriving at an error equation for $\W:=\U-\U_2$ that shares the form as for the RSW system in Section \ref{sec:4}, except that $A_i(\U)-A_i(\U_2)\neq A_i(\U-\U_2)$ due to nonlinearity which is essentially quadratic in the sense that, \footnote{Consider a typical term of $A_i$, e.g. $e^{\Froude S}p$. Applying (\ref{exp:est}) together with Gagliardo-Nirenberg inequality to $e^{\Froude S}-e^{\Froude S_2}=e^{\Froude S_2}(e^{\Froude(S-S_2)-1})$, we can show $\|e^{\Froude S}-e^{\Froude S_2}\|_n\lesssim\|S-S_2\|_n$. The estimate on $\|e^{\Froude S}p-e^{\Froude S_2}\pl\|_n$ then follows by applying identity $ab-a_2b_2=(a-a_2)(b-b_2)+(a-a_2)b_2+a_2(b-b_2)$ together with the triangle inequality and the G-N inequality. Here regularity of $S_2$ and $p_2$ is a priori known.} 
\[
\|A_i(\U)-A_i(\U_2)\|_n\lesssim\|\U-\U_2\|^2_n+\|\U-\U_2\|_n,\qquad i=0,1,2,
\]
\[
\|A_i(\U)-A_i(\U_2)\|_{W^{1,\infty}}\lesssim\|\U-\U_2\|^2_{W^{1,\infty}}+\|\U-\U_2\|_{W^{1,\infty}},\qquad i=0,1,2.
\]
where $n>2$. This additional nonlinearity manifests itself as three more multiplications in the energy inequality,
\[
{d\over dt}\|\W\|_{m-3}\lesssim\|\W\|^5_{m-3}+...+\|\W\|_{m-3}+\dde,\qquad\|\W(0,\cdot)\|_{m-3}=0,
\]
whose solution (-- developed around a simple root of the quintic polynomial on the right), has the same asymptotic behavior as for the quadratic Riccati equations derived in the previous sections.
  
\begin{theorem}\label{thm:main2}
Consider the (symmetrized) rotational Euler equations on a fixed 2D torus (\ref{nEuler}) 
subject to sub-critical initial data $(p_0,\vu_0,S_0)\in H^m({\T^2})$ with $m>5$.

\noindent
Let 
\[
\dde={{\Rossby}\over\Froude^2}
\]
denote the ratio between the Rossby and the squared Mach numbers, with 
subcritical  $\Rossby \leq \Rossby_c(nabla\vu_0)$ so that (\ref{CT}) holds. Assume $\Froude<1$ for substantial amount of pressure forcing in (\ref{nEuler:u}). 
Then, there exists a constant $C$, depending only on $m$, $\|(p_0,\vu_0,S_0)\|_m$, $\Rossby_c$, such that the ideal gas equations admit a smooth,  ``approximate periodic'' solution in the sense that there exists a near-by $2\pi\Rossby$-periodic solution, $(\pl(t,\cdot),\ull(t,\cdot),S_2(t,\cdot))$ such that
\begin{equation}\label{span:gas}
\|p(t,\cdot)-\pl(t,\cdot)\|_{m-3}+ \|\vu(t,\cdot)-\ull(t,\cdot)\|_{m-3}+\|S(t,\cdot)-S_2(t,\cdot)\|_{m-3} \le\At.
\end{equation} 
It follows that  the life span of the ideal gas solution, $t\lesssim t_{\dde}:=\ln(\dde^{-1})$ is prolonged due to the rapid rotation $\dde \ll 1$, and in particular, it tends to infinity when $\dde\rightarrow 0$. 
\end{theorem}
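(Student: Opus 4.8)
\textbf{Proof proposal for Theorem \ref{thm:main2}.}

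The plan is to mirror the three-step strategy developed for the RSW equations in Sections \ref{sec:3}--\ref{sec:4}, tracking the extra nonlinearity introduced by the triple products $e^{\Froude S}p\nabla p$. First I would establish the periodicity and regularity of the approximate solution $\U_2=(\pl,\ull,S_2)^\top$ defined by (\ref{nEuler2}). Since $S_2$ and $\pl$ are governed by equations of the transport type driven by the $2\pi\Rossby$-periodic pressureless velocity $\ul$, Lemma \ref{period} (or the transport-equation variant for $S_2$, which is conserved along the closed trajectories $\Gamma_0$) gives $2\pi\Rossby$-periodicity, and Corollary \ref{CTcor} together with the standard energy method and the Gagliardo--Nirenberg inequality gives uniform-in-time Sobolev bounds $\|\pl\|_{m-1},\|S_2\|_{m-1},\|\ull-\ul\|_{m-2}\lesssim\C(1+\Rossby/\Froude)$, exactly as in Theorem \ref{hl:th} and Corollary \ref{ul:cor}. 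Here the one new ingredient is the exponential estimate (\ref{exp:est}), $\|e^{\Froude S}-1\|_m\le\Froude\Cinf\|S\|_m$, which follows by summing the Taylor series of $e^{\Froude S}$ and applying Gagliardo--Nirenberg recursively to $\|(\Froude S)^j\|_m$, using that $|\Froude S|_\infty=|\Froude S_0|_\infty$ is a transport invariant.

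Second, I would write the residual $\bR$ produced when $\U_2$ is substituted into the symmetric system (\ref{nEuler:comp}); as in (\ref{Ru}) it consists of the commutator-type terms $(\ull-\ul)\cn(\cdot)$ and the correction $R$ coming from freezing $h_2(s,\cdot)\approx h_2(t,\cdot)$ inside the Duhamel integral, now dressed with the factor $\Cg e^{\Froude S_2}(1+\Froude p_2)$. Because $H^{m-3}(\T^2)$ is an algebra for $m>5$, every term is controlled by quadratic products of $\|\ul\|_m$, $\|\pl\|_{m-1}$, $\|S_2\|_{m-1}$, $\|\ull-\ul\|_{m-2}$ up to an $\mathcal{O}(1+\Froudei)$ factor, and the estimate (\ref{exp:est}) keeps the exponential factors bounded; hence $\|\bR\|_{m-3}\lesssim\C^2\dde$ for $\dde<1$, $\Froude<1$, just as in (\ref{Rineq}).

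Third comes the energy comparison for $\W:=\U-\U_2$. Subtracting (\ref{nEuler2}) from (\ref{nEuler:comp}) and using the symmetry of the $A_i$ and the skew-symmetry of $K$, I would derive a differential inequality for $\|\W\|_{m-3}^2$. The decisive difference from the RSW and isentropic cases is that $A_i(\U)-A_i(\U_2)\ne A_i(\W)$: by the footnoted identity $ab-a_2b_2=(a-a_2)(b-b_2)+(a-a_2)b_2+a_2(b-b_2)$ combined with (\ref{exp:est}) and Gagliardo--Nirenberg, one gets $\|A_i(\U)-A_i(\U_2)\|_n\lesssim\|\W\|_n^2+\|\W\|_n$ (and the analogous $W^{1,\infty}$ bound), which feeds \emph{cubic} and \emph{quartic} self-interactions into the estimate. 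Also $A_0(S)$ is not the identity, so I would run the energy argument in the $A_0$-weighted norm $\langle A_0(S)\W,\W\rangle$, absorbing $\pt A_0$ into the lower-order terms using $\|\pt S\|_\infty\lesssim\|\vu\|_{W^{1,\infty}}$. The outcome is a forced quintic differential inequality
\[
{d\over dt}\|\W\|_{m-3}\lesssim\|\W\|_{m-3}^5+\cdots+\|\W\|_{m-3}+\C^2\dde,\qquad \|\W(0,\cdot)\|_{m-3}=0,
\]
whose solution, developed around the simple root of the quintic on the right-hand side exactly as the forced Riccati equation was treated in \cite[\S5]{LT02}, satisfies the same bound $\|\W(t,\cdot)\|_{m-3}\le\At$. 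This yields (\ref{span:gas}) and the life-span $t\lesssim t_\dde=\ln(\dde^{-1})$.

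\textbf{Main obstacle.} The serious point is the additional nonlinearity of the full gas system: controlling $A_i(\U)-A_i(\U_2)$ through the triple products $e^{\Froude S}p\nabla p$, and verifying that the resulting quintic forced differential inequality still has the same asymptotic behavior near its relevant simple root as the quadratic Riccati equations of the earlier sections. Everything else is a routine transcription of the RSW argument, with the bookkeeping kept honest by the exponential estimate (\ref{exp:est}) and the transport-invariance of $|\Froude S|_\infty$.
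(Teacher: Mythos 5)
Your proposal follows essentially the same route as the paper: periodicity and regularity of $(\pl,\ull,S_2)$ via the transport/Duhamel construction and the exponential estimate (\ref{exp:est}), an $\mathcal{O}(\dde)$ residual bound from the algebra property of $H^{m-3}$, and an energy comparison in which the quadratic control of $A_i(\U)-A_i(\U_2)$ produces the forced quintic inequality solved near its simple root. Your explicit remark about running the energy estimate in the $A_0(S)$-weighted norm is a detail the paper leaves implicit, but it is the standard treatment and does not change the argument.
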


\section{Appendix. Staying away from vacuum}\label{app:A}
We will show the following proposition on the new variable $\pl$ defined in section \ref{sec:4}.

\begin{proposition}
Let $\pl$ satisfies 
\begin{equation}\label{qi}1+{1\over2}\Froude \pl=\sqrt{1+\Froude \hll}\end{equation} where $\hll$ is defined as in (\ref{hl:eq}), that is,
\begin{equation}\label{h11}
\pt \hll +\ul\cn\hll+\left(\Froudei+\hll\right)\nc\ul=0
\end{equation}
subject to initial data $\hll(0,\cdot)=h_0(\cdot)$ that satisfies the non-vacuum condition $1+\Froude h_0(\cdot)\ge\alpha_0>0$. Then,
\[
|\pl|_\infty\le\Cinf\left( 1+{\Rossby\over\Froude}\right),
\]
\[
\|\pl\|_n\le\C\left( 1+{\Rossby\over\Froude}\right).
\]
\end{proposition}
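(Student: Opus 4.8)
The plan is to control $\pl$ by first controlling $\hll$ and then translating those bounds through the algebraic relation \eqref{qi}. The key observation is that \eqref{qi} gives $1+\Froude\hll=(1+\tfrac12\Froude\pl)^2$, so $\pl = \tfrac{2}{\Froude}(\sqrt{1+\Froude\hll}-1) = \tfrac{2\hll}{1+\sqrt{1+\Froude\hll}}$, which is a smooth function of $\hll$ as long as $1+\Froude\hll$ stays bounded away from $0$. Hence the first step is to establish a uniform-in-time \emph{lower} bound $1+\Froude\hll(t,\cdot)\ge\beta_0>0$. To do this, I would apply Lemma \ref{period} with $w:=\Froudei+\hll$: since $w$ satisfies $\pt w+\nc(\ul w)=0$, it is $2\pi\Rossby$-periodic, and along particle trajectories $\Gamma_0$ one has $\tfrac{d}{dt}\log w = -\nc\ul$. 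Integrating and using $|\nc\ul|_\infty\le\Cinf$ from \eqref{uo:inf} together with $2\pi\Rossby\le2\pi\Rossby_c$, the multiplicative factor $\exp(-\int_0^t\nc\ul\,ds)$ stays in a fixed compact subset of $(0,\infty)$, so $w$ — hence $1+\Froude\hll$ — is bounded above \emph{and} below by positive constants depending only on $\alpha_0$, $\Rossby_c$ and $|\nabla\vu_0|_\infty$.

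With the non-vacuum bound in hand, the $L^\infty$ estimate on $\pl$ is immediate: $|\pl|_\infty = \big|\tfrac{2\hll}{1+\sqrt{1+\Froude\hll}}\big|_\infty \le 2|\hll|_\infty$, and theorem \ref{hl:th} gives $|\hll|_\infty\le\Cinf(1+\Rossby/\Froude)$, which is the first claimed inequality. For the Sobolev bound, I would write $\pl = G(\Froude\hll)\,\hll$ where $G(\xi):=2/(1+\sqrt{1+\xi})$ is smooth on the interval $[\Froude\inf\hll,\Froude\sup\hll]$, a compact set avoiding $\xi=-1$ by the previous step. Then I invoke the standard composition/Moser estimate: for $m>2$ (so $H^{m-2}$, or $H^n$, is an algebra and $W^{n,\infty}\hookrightarrow$), $\|G(\Froude\hll)\|_n\le C(\|\hll\|_\infty,n)\big(1+\Froude\|\hll\|_n\big)$, and since $\Froude\le1$ this is $\lesssim\C(1+\Rossby/\Froude)$. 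A final application of the Gagliardo–Nirenberg product rule to $\pl=G(\Froude\hll)\hll$ — with one factor measured in $L^\infty$ and the other in $H^n$ — yields $\|\pl\|_n\le\C(1+\Rossby/\Froude)$, again using $\Froude\le1$ to absorb the $\Froude$ prefactors.

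The main obstacle is the first step: one must rule out that $\hll$ drifts toward the vacuum value $-1/\Froude$ over the long time scales of interest. The transport structure of $w=\Froudei+\hll$ is exactly what saves the argument — because $w$ solves a pure continuity equation, its logarithm along trajectories is governed only by $\nc\ul$, whose $L^\infty$ norm is uniformly controlled on the subcritical range $\Rossby\le\Rossby_c$ \emph{and} whose time-periodicity lets us cap the exponent by a fixed constant rather than letting it grow with $t$. Everything after that — the composition estimate for $G$ and the product estimate — is routine Moser/Gagliardo–Nirenberg bookkeeping, with the only mild subtlety being to keep track that all $\Froude$-dependent prefactors are harmless under the standing assumption $\Froude\le1$. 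I would also note that the constants $\Cinf$ and $\C$ produced this way depend on $\alpha_0$ only through the derived lower bound $\beta_0$, which is consistent with the dependence asserted in theorem \ref{thm:main}.
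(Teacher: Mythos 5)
Your proposal is correct, but it takes a route that is essentially dual to the paper's. The paper's appendix proceeds in the opposite order: it first converts bounds on $h_0$ into bounds on $\pl(0,\cdot)$ purely algebraically (writing $\pl+\frac{1}{4}\Froude \pl^2=\hll$, applying $D^k$, and using the non-vacuum condition at $t=0$ to extract $\|D^k\pl\|_0$ recursively from $\|\pl\|_{k-1}$ and $\|\hll\|_k$), and then derives a transport equation for $\pl$ itself of the same form as (\ref{hl:eq}), so that the Gronwall-plus-periodicity machinery of Theorem \ref{hl:th} applies verbatim to $\pl$ at positive times. You instead keep the evolution entirely on the $\hll$ side (where Theorem \ref{hl:th} already gives the bounds), prove a uniform-in-time lower bound $1+\Froude\hll\ge\beta_0>0$ via the Lagrangian form of the continuity equation for $w=\Froudei+\hll$, and then transfer to $\pl$ at each fixed time by a Moser composition estimate for $G(\xi)=2/(1+\sqrt{1+\xi})$. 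Each approach buys something: yours makes explicit the persistence of the non-vacuum condition for all $t$ (the paper asserts this in the proof of Theorem \ref{thm:main} but its appendix only uses non-vacuum at $t=0$, even though deriving the equation for $\pl$ requires dividing by $1+\frac{1}{2}\Froude\pl$ at every time — so your Step 1 is genuinely needed by their argument too, and supplying it is a real improvement); the paper's route avoids composition estimates at positive times and reuses the transport-equation technique with no new tools. One bookkeeping point in your version: the product estimate for $G(\Froude\hll)\hll$ produces an intermediate term of size $\Froude\,\C\,(1+\Rossby/\Froude)^2=\C(\Froude+2\Rossby+\Rossby\cdot\Rossby/\Froude)$, which you must absorb into $\C(1+\Rossby/\Froude)$ using $\Rossby\le\Rossby_c$; this works but deserves the one line you essentially allude to when you say the $\Froude$-prefactors are harmless.
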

The proof of this proposition follows two steps. First, we show that the $L^\infty$ and $H^n$ norms of $\pl(0,\cdot)$ are dominated by $\hll(0,\cdot)$ due to the non-vacuum condition. Second, we derive the equation for $\pl$ and obtain regularity estimates using similar techniques from section \ref{sec:4}.

Step 1. For simplicity, we use $p:=\pl(0,\cdot)$ and $h:=\hll(0,\cdot)$.

Solving (\ref{qi}) and differentiation yield
\[
p={2 h\over\sqrt{1+\Froude h}+1}, \quad \nabla p={\nabla h\over\sqrt{1+\Froude h}}.
\]
Clearly,  $|p|_\infty\le|h|_\infty$.
The above identities, together with the non-vacuum condition imply 
\[
\| p\|_1\le  2\|h\|_1 \quad\mbox{ and }\quad |\nabla p|_{L^\infty}\le {|\nabla h|_{L^\infty}\over\sqrt{\alpha_0}}.
\]

For higher derivatives of $p$, we use the following recursive relation. Rewrite (\ref{qi}) as $ p+{1\over4}\Froude p^2= h$ and then take the $k$-th derivative on both sides 
\[
D^k p+{1\over4}\Froude2 p D^k p+{1\over4}\Froude\left(D^k(q^2)-2 p D^k p\right)=D^k h
\]
so that taking $L^2$ norm of this equation yields
\[
I-II:=\left\|(1+{1\over2}\Froude p)D^k p\right\|_0-{1\over4}\Froude\left\|D^k(q^2)-2 p D^k p\right\|_0\le\|D^k h\|_0.
\]
Furthermore, we find $I\ge\sqrt{\alpha_0}\|D^k p\|_0$ by (\ref{qi}) and the non-vacuum condition. We also find $II\lesssim_n|\nabla p|_\infty\| p\|_{|k|-1}$ by  Gagliardo-Nirenberg inequalities. Thus we arrive at a recursive relation 
\[
\|p\|_{|k|}\le\Cinf (\|p\|_{|k|-1}+\|h\|_{|k|})
\]
 which implies that the $H^n$ norm of $\pl(0,\cdot)=p$ is dominated by $\|\hll(0,\cdot)\|_n=\|h\|_n$.     

Step 2. We derive an equation for $\pl$ using relation (\ref{qi}) and equation (\ref{h11}),
\[
\pt \pl+2\ul\cn\pl+\left(\Froudei+\pl\right)\nc\ul=0.
\]
This equation resembles the formality of the approximate mass equation (\ref{hl:eq}) for $\hll$ and thus we apply similar technique to arrive at the same regularity estimate for $\pl$, 
\begin{eqnarray*}
|\pl(t,\cdot)|_\infty&\le&\Cinf\left( 1+{\Rossby\over\Froude}\right),\\
\|\pl(t,\cdot)\|_n&\le&\C\left( 1+{\Rossby\over\Froude}\right).
\end{eqnarray*}

\end{document}